\numberwithin{equation}{section}
\newcommand{\R}{{\mathbb R}}
\newcommand{\N}{{\mathbb N}}
\newcommand{\C}{{\mathbb C}}
\newcommand{\s}{{\mathbb S}}
\newcommand{\be}{\begin{eqnarray}}
\newcommand{\ben}{\begin{eqnarray*}}
\newcommand{\en}{\end{eqnarray}}
\newcommand{\enn}{\end{eqnarray*}}
\newcommand{\curl}{{\rm curl\,}}
\newcommand{\divv}{{\rm div\,}}
\newtheorem{theorem}{Theorem}[section]
\newtheorem{lemma}[theorem]{Lemma}
\newtheorem{corollary}[theorem]{Corollary}
\newtheorem{remark}[theorem]{Remark}
\definecolor{rot}{rgb}{1.000,0.000,0.000}
\begin{document}
\renewcommand{\theequation}{\arabic{section}.\arabic{equation}}
\begin{titlepage}
\title{\bf Unique determination of a penetrable scatterer of rectangular type for
inverse Maxwell equations \\ by a single incoming wave} 


\author{
Guanghui Hu\thanks{ Beijing Computational Sciences Research Center, Beijing 100193, China ({\tt
hu@csrc.ac.cn}).}
\and
Long Li\thanks{LMAM, School of Mathematical Sciences, Peking University, Beijing 100871,China, and
Beijing Advanced Innovation Center for Imaging Technology, Capital Normal University, Beijing 100048, China ({\tt 1601110046@pku.edu.cn}).
}
\and Jun Zou
\thanks{Department of Mathematics, The Chinese University of Hong Kong, Hong Kong.
({\tt zou@math.cuhk.edu.hk}).}
 }
\date{}
\end{titlepage}
\maketitle
\vspace{.2in}

\begin{abstract} This work is concerned with an inverse electromagnetic scattering problem in two dimensions. We prove that in the TE polarization case, the knowledge of the electric far-field pattern incited by a single incoming wave is sufficient to uniquely determine the shape of a penetrable scatterer of rectangular type.
As a by-product,
the uniqueness is also confirmed to inverse transmission problems
modelled by scalar Helmholtz equations with discontinuous normal derivatives at the scattering interface.


\vspace{.2in} {\bf Keywords}: Uniqueness, inverse medium scattering, Maxwell equations, one incoming wave, shape identification, right corners.
\end{abstract}

\section{Introduction and main results}\label{sec:introduction}
Assume a time-harmonic electromagnetic plane wave $(E^{in}, H^{in})$ is incident onto an infinitely long penetrable scatterer of cylindrical type sitting inside a homogeneous background medium. We use $D\subset \R^2$ to denote
the cross-section of the scatterer in the $ox_1x_2$-plane so that the space occupied by the scatterer can be represented
by $\Omega=D\times \R\subset \R^3$. We assume that $D$ is a bounded open domain with the connected exterior $D^e:=\R^2\backslash\overline{D}$. The medium inside the scatterer is supposed to be homogeneous, isotropic and invariant along the $x_3$-direction.
The direction of the incoming wave is supposed to be perpendicular to the $ox_3$-axis, i.e., ${\rm \textbf{d}}=(d,0)$ where $d=(d_1, d_2)\in \s:=\{x=(x_1, x_2): |x|=1\}$. In the TE (transverse electric) polarization case, the incident electric plane wave takes the form
\ben
E^{in}({\rm \textbf{x}})=(u^{in}(x), 0), \quad\quad {\rm \textbf{x}}=(x, x_3)\in\R^3,
\enn
where
\[
u^{in}(x)=d^\perp
\,e^{i\kappa x\cdot d},\quad d^\perp:=(-d_2, d_1)\,,
\]
and $\kappa>0$ denotes the wavenumber of the homogeneous background medium.
The total field $E$ can be analogously written as $E({\rm \textbf{x}})=(u(x), 0)$, where
$u=(u_1, u_2)$ is governed by the two-dimensional Maxwell system
\be\label{eq:Helm}
\overrightarrow{\nabla} \times (\nabla \times u)-\kappa^2  q(x) u =0\quad\mbox{in}\quad \R^2,
\en
where $q$ is the refractive index. 
In this paper we assume that the function $q$, after normalization, is given by the piecewise constant function
\ben
q(x)=\left\{\begin{array}{cll}
1,\quad&\mbox{if}\quad x\in D^e,\\
q_0\neq 1,\quad&\mbox{if}\quad x\in \overline{D}.
\end{array}\right.
\enn
Note that $\overrightarrow{\nabla}$ and $\nabla$ are two dimensional {\rm curl} operators defined by
\be
\overrightarrow{\nabla}\times f=(\partial_2f, -\partial_1 f),\qquad \nabla\times u=\partial_1u_2-\partial_2u_1.
\en
Let $\nu\in \s:=\{x\in\R^2: |x|=1\}$ be the unit normal on $\partial D$ pointing into $D^e$,
and $\tau:=\nu^\bot$. Then it is direct to derive
the interface conditions complemented to the system \eqref{eq:Helm} by using
the continuity of the tangential components of $E$ and $\curl E$ across the interface $\partial D$:
\be\label{TE}
\nabla\times u^+=\nabla\times u^-,\quad \tau\cdot u^+=\tau\cdot u^-\quad\mbox{on}\quad \partial D,
\en
where the superscripts $\pm$ stand for the limits taken from outside and inside of $D$, respectively.
 At the infinity, the scattered field
 $u^{sc}:=u-u^{in}$ is assumed to meet
 the two-dimensional Silver-M\"{u}ller radiation condition (see also \cite{BP2012, Zou2015})
\begin{equation}\label{eq:radiation}
\lim_{|x|\rightarrow \infty} \sqrt{|x|}\,\left\{ \nabla\times u^{sc}-i\kappa u^{sc}\cdot \hat{x}^\bot \right\}=0,
\end{equation}
uniformly in all directions $\hat{x}=x/|x|\in\s$.
In particular, the radiation condition (\ref{eq:radiation}) leads to  the asymptotic expansion
\be\label{far-field}
u^{sc}(x)=\frac{e^{i\kappa r}}{\sqrt{r}}\left\{ u^\infty(\hat{x})\,\hat{x}^\bot+O(1/r) \right\}\quad\mbox{as}\quad r=|x|\rightarrow\infty,
\en
uniformly in all $\hat{x}\in \s$.
 The function $u^\infty(\hat x)$ in \eqref{far-field}
 is analytically defined on $\s$, and referred to as the electric \emph{far-field pattern} or the \emph{scattering amplitude} in the TE polarization case, where vector $\hat{x}\in\s$ is called the observation direction of the 
 far field.

It is worth noting that (\ref{eq:radiation}) is a reduction of the three-dimensional
Silver-M\"{u}ller radiation condition
\ben
\lim_{|{\rm \textbf{x}}|\rightarrow \infty} |{\rm \textbf{x}}|\,\left\{ (\nabla\times E^{sc})\times \hat{{\rm \textbf{x}}}-i\kappa E^{sc} \right\}=0,
\enn
to two dimensions, where $E^{sc}({\rm \textbf{x}})=(u^{sc}(x),0)$ and
\ben
{\rm \textbf{x}}=(x,0),\quad \;\hat{{\rm \textbf{x}}}={\rm \textbf{x}} /|{\rm \textbf{x}}|\in \s^2:=\{{\rm \textbf{x}}\in \R^3: |{\rm \textbf{x}}|=1\}.
\enn
The asymptotic expansion (\ref{far-field}) implies that the far-field patterns $(E^\infty, H^\infty)$ of the scattered electromagnetic fields $(E^{sc}, H^{sc})$ take the form
\ben
E^\infty(\hat{\textbf{x}})=u^\infty(\hat{x})\,\begin{pmatrix}
\hat{x}^\bot \\ 0
\end{pmatrix},\quad
H^\infty(\hat{\textbf{x}})=u^\infty(\hat{x}) \begin{pmatrix}
0 \\ 0 \\ 1
\end{pmatrix}.
\enn
Obviously, there hold the relations that
\ben
\hat{\textbf{x}}\cdot E^\infty(\hat{\textbf{x}})=0,\qquad
\hat{\textbf{x}}\cdot H^\infty(\hat{\textbf{x}})=0, \qquad
H^\infty(\hat{\textbf{x}})=\hat{\textbf{x}}\times E^\infty(\hat{\textbf{x}}).
\enn
Hence,
the knowledge of the far-field data $u^\infty(\hat{x})$ is equivalent to that of the electric far-field pattern measured on $\s\times\{0\}\subset\s^2$.

The inverse scattering problem of our interest is to recover the interface $\partial D$
from the far-field pattern $u^\infty(\hat{x})$ for all $\hat{x}\in\s$, and the main result we shall establish in this work
can be stated below.
\begin{theorem}\label{TH}
The boundary $\partial D$ of an arbitrary rectangle $D$ in $\R^2$ is
uniquely determined by the far-field pattern $u^\infty(\hat{x})$ for all $\hat{x}\in\s$ incited
by a single incoming wave.
\end{theorem}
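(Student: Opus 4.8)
\noindent The argument proceeds in three stages. First, I would reduce the vectorial problem \eqref{eq:Helm}--\eqref{eq:radiation} to a scalar transmission problem. Given a solution $u=(u_1,u_2)$, put $w:=\nabla\times u=\partial_1u_2-\partial_2u_1$. In each region where $q$ is constant, \eqref{eq:Helm} reads $\overrightarrow{\nabla}\times w=\kappa^2 q u$, so $u=(\kappa^2 q)^{-1}(\partial_2 w,-\partial_1 w)$, and applying $\nabla\times$ to this identity yields $\Delta w+\kappa^2 q w=0$. Since $\nabla\times u=w$ and $\tau\cdot u=-(\kappa^2 q)^{-1}\partial_\nu w$, the interface conditions \eqref{TE} turn into
\[
w^+=w^-,\qquad \partial_\nu w^+=q_0^{-1}\,\partial_\nu w^-\qquad\text{on }\partial D .
\]
Moreover $w^{in}:=\nabla\times u^{in}=i\kappa\,e^{i\kappa x\cdot d}$ is a nontrivial plane wave, $w^{sc}:=w-w^{in}=\nabla\times u^{sc}$ is a radiating solution of $\Delta w^{sc}+\kappa^2 w^{sc}=0$ in $D^e$, and differentiating \eqref{far-field} gives its far-field pattern $w^\infty=i\kappa\,u^\infty$. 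Thus knowledge of $u^\infty$ on $\s$ is equivalent to knowledge of $w^\infty$ on $\s$, and it suffices to prove that $\partial D$ (a rectangle) is determined by $w^\infty$ for this scalar transmission problem; because of the jump $\partial_\nu w^+=q_0^{-1}\partial_\nu w^-$ this is precisely the by-product announced in the abstract.

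Next, I would argue by contradiction: suppose two rectangles $D_1\neq D_2$ produce the same far-field pattern, with total fields $w_1,w_2$. By Rellich's lemma $w_1^{sc}=w_2^{sc}$ outside a large ball, and since both $w_j^{sc}$ solve $\Delta v+\kappa^2 v=0$ in $\R^2\setminus\overline{D_1\cup D_2}$, unique continuation gives $w_1=w_2$ on the unbounded component of $\R^2\setminus\overline{D_1\cup D_2}$. If every vertex of $D_1$ lay in $\overline{D_2}$ and every vertex of $D_2$ in $\overline{D_1}$, convexity would force $\overline{D_1}=\overline{D_2}$; hence there is a vertex $x^*$ of, say, $D_1$ with $x^*\notin\overline{D_2}$. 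Choose $h>0$ so small that $\overline{B_h(x^*)}\cap\overline{D_2}=\emptyset$ and $K:=B_h(x^*)\cap D_1$ is an open right-angled sector with vertex $x^*$; then $B_h(x^*)\setminus\overline{D_1}$ lies in the unbounded component, so $w_1=w_2$ there. Consequently, in $B:=B_h(x^*)$ one has $\Delta w_1+\kappa^2 q_0 w_1=0$ in $K$, $\Delta w_2+\kappa^2 w_2=0$ in all of $B$ (because $x^*\notin\overline{D_2}$, so $w_2$ is real-analytic on $B$), while across the two straight edges $\Sigma:=\partial K\cap B$ the relations $w_1=w_2$ and $\partial_\nu w_1=q_0\,\partial_\nu w_2$ hold, with $q_0\neq1$.

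Finally, I would run a complex-geometric-optics (CGO) corner argument at $x^*$. Because $K$ has opening $\pi/2<\pi$, its closure minus $x^*$ lies in an open half-plane, so for every $s>0$ one may choose $\varrho=\varrho(s)\in\C^2$ with $\varrho\cdot\varrho=0$, $|\varrho|=s$ and $\real(\varrho\cdot(x-x^*))\le-c\,s\,|x-x^*|$ on $K$ for a fixed $c>0$; then $\phi_s(x):=e^{\varrho\cdot(x-x^*)}$ is harmonic. With $\psi:=w_1-w_2$ on $K$ one has $\psi=0$ and $\partial_\nu\psi=(q_0-1)\partial_\nu w_2$ on $\Sigma$, and $\Delta\psi+\kappa^2q_0\psi=-\kappa^2(q_0-1)w_2$ in $K$; Green's identity on $K$ applied to $(\phi_s,\psi)$ then gives
\[
(q_0-1)\int_{\Sigma}\phi_s\,\partial_\nu w_2\,ds=-\kappa^2(q_0-1)\int_{K}\phi_s\,w_2\,dx+R_s ,
\]
where $R_s$ collects the integral over the arc $\partial B\cap K$ and the term $-\kappa^2 q_0\int_K\phi_s\psi\,dx$. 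Using the exponential decay of $\phi_s$, together with the H\"older continuity of $\psi$ at $x^*$, one checks that $R_s$ and $\int_K\phi_s w_2\,dx$ are $o(s^{-1})$ as $s\to\infty$, whereas a Laplace-type expansion of $\int_\Sigma\phi_s\partial_\nu w_2\,ds$ (via $\int_0^h t^k e^{-cst}\,dt\sim\Gamma(k+1)(cs)^{-(k+1)}$) has leading order $s^{-1}$ with coefficient a nontrivial linear combination of $\partial_1 w_2(x^*)$ and $\partial_2 w_2(x^*)$; since $q_0\neq1$ this coefficient must vanish, and running the two admissible choices of $\varrho$ forces $\nabla w_2(x^*)=0$. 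Replacing $\phi_s$ by CGO functions $p(x)\phi_s(x)$ with harmonic polynomial prefactors and iterating on the homogeneity degree forces every Taylor coefficient of $w_2$ at $x^*$ to vanish, so $w_2\equiv0$ in $B$ by real-analyticity. Then $w_1=w_2=0$ and $\partial_\nu w_1=q_0\partial_\nu w_2=0$ on $\Sigma$, i.e. $w_1$ has zero Cauchy data on the two edges of $K$; Holmgren's theorem gives $w_1\equiv0$ in $K$, hence $w_1\equiv0$ in $B$ and, by unique continuation, $w_1\equiv0$ in $\R^2$. This contradicts $w_1=i\kappa\,e^{i\kappa x\cdot d}+w_1^{sc}$ with $w_1^{sc}$ radiating, which proves the theorem.

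The hard part is the corner argument. In the classical penetrable-medium transmission problem the Cauchy data are continuous across $\partial D$ and the boundary integrals over $\Sigma$ simply drop out; here the jump $\partial_\nu w^+=q_0^{-1}\partial_\nu w^-$ produces the genuinely new term $(q_0-1)\int_\Sigma\phi_s\,\partial_\nu w_2\,ds$, and the crux is to show that, at a right angle and for the two harmonic CGO phases, the leading-order coefficient of this term does not vanish --- this is exactly where both $q_0\neq1$ and the right-angle structure are used, and it explains the restriction to rectangles. Secondary technical points are the bootstrap from $\nabla w_2(x^*)=0$ to infinite-order vanishing, carried out with polynomial-enriched CGO functions and careful control of the remainder using De Giorgi--Nash--Moser estimates for $\psi$ near the corner, and the elementary but somewhat case-sensitive geometric lemma on exposed right corners of two distinct rectangles.
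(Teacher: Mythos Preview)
Your route differs substantially from the paper's. The paper stays with the vector Maxwell system \eqref{eq:Helm}: after locating an exposed right corner $O$ of one rectangle, it invokes Cauchy--Kovalevskaya to extend the interior field $u$ analytically across that corner, so that $u$ and $\tilde u$ are both real-analytic on a full disk $B_\epsilon(O)$, satisfy divergence-free Helmholtz equations there with the distinct constants $\kappa^2 q_0$ and $\kappa^2$, and share tangential component and curl on the two edges. The engine is then Lemma~\ref{Lem:1}, proved in Section~\ref{sec:proof} by a lengthy induction on the Taylor coefficients of $u-\tilde u$ (grouped by total degree $n+m$), which forces $u=\tilde u=0$ in $B_\epsilon(O)$. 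No CGO solutions appear, and the scalar reduction \eqref{TE-H}--\eqref{TE-Ht} is mentioned only as a by-product; the authors explicitly keep the vector formulation because it is the transmission pair $(\nabla\times u,\tau\cdot u)$, not $(\partial_\nu w,w)$, that is continuous across $\partial D$ and hence amenable to Cauchy--Kovalevskaya.

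Your scalar reduction and the Green identity are set up correctly, and the order-$s^{-1}$ step yielding $\nabla w_2(x^*)=0$ is sound for the right angle. The gap is the assertion that ``iterating on the homogeneity degree forces every Taylor coefficient of $w_2$ at $x^*$ to vanish''. To read off information at order $s^{-k}$ you need $\int_K\phi_s\,\psi\,dx=o(s^{-k})$, which requires $\psi$ to vanish at $x^*$ to order roughly $k-2$. De Giorgi--Nash--Moser only delivers $\psi\in C^\alpha$ with $\psi(x^*)=0$, a \emph{fixed} vanishing order $\alpha<1$, so the iteration stalls after one or two steps. Since $\partial_\nu\psi=(q_0-1)\partial_\nu w_2$ does not vanish on $\Sigma$, you cannot appeal to Holmgren to upgrade the vanishing order of $\psi$ as the argument proceeds; you would need a corner elliptic estimate tying the vanishing order of $\psi$ to that of its boundary data, and such an estimate is sensitive to the singular exponents of the transmission problem with contrast $q_0$. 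This is exactly the ``essential difficulty'' the paper flags when it says the CGO arguments in the literature do not extend to $\lambda\neq1$. The paper circumvents it via the analytic extension, after which the problem is pure algebra on Taylor coefficients; if you import that extension into your scalar setting, $\psi$ becomes analytic and the CGO bootstrap can indeed be completed---but at that point you are essentially redoing Lemma~\ref{Lem:1} in CGO language rather than giving an independent proof.
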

Theorem \ref{TH} indicates that, in the TE polarization case, the far-field data of a single incoming wave is sufficient to determine the shape of a rectangular penetrable scatterer in $\R^2$. This is a global uniqueness result within the class of penetrable rectangles for the 2D Maxwell system. For the local uniqueness,
we have the following result.
\begin{corollary}\label{Cor1.2}
Let $\Omega=D\times\R$ and $\tilde{\Omega}=\tilde{D}\times \R$ be two infinitely long penetrable scatterers of cylindrical type, and $u^\infty$ and $\tilde{u}^\infty$ be their respective far-field patterns (TE case).
If $\partial D$ differs from $\partial \tilde{D}$ in the presence of a right corner lying on the boundary of the unbounded component of $\R^2\backslash\overline{D\cup\tilde{D}}$ (see Figure \ref{F2}). Then it is impossible that $u^\infty(\hat{x})=\tilde{u}^\infty(\hat{x})$ for all $\hat{x}\in \s$.
\end{corollary}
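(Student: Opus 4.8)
\medskip
\noindent\textbf{Proof proposal.}
I would argue by contradiction, reducing the equality of far-field patterns to a local statement at the distinguished right corner and then to a scalar Helmholtz transmission problem. Suppose $u^\infty=\tilde u^\infty$ on $\s$ and let $G$ be the unbounded connected component of $\R^2\setminus\overline{D\cup\tilde D}$. Since $u^{sc}$ and $\tilde u^{sc}$ are radiating solutions of \eqref{eq:Helm} with $q\equiv1$ in $G$ with the same far-field pattern, Rellich's lemma together with interior analyticity and unique continuation for \eqref{eq:Helm} give $u^{sc}=\tilde u^{sc}$, hence $u=\tilde u$, in $G$. By the geometric hypothesis, after possibly interchanging $D$ and $\tilde D$ and shrinking a ball $B=B_\epsilon(O)$ about the right corner $O$, I may take Cartesian coordinates with $O$ at the origin so that $\tilde D\cap B=\emptyset$, the sector $\mathcal S:=D\cap B$ equals $\{x_1>0,\,x_2>0\}\cap B$ with edges $\Gamma_1\subset\{x_1=0\}$, $\Gamma_2\subset\{x_2=0\}$, and $B\setminus\overline{\mathcal S}\subset G$; in particular $\tilde u$ is real-analytic in $B$ and $u=\tilde u$ on $B\setminus\overline{\mathcal S}$.

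Next I would apply $\nabla\times$ to \eqref{eq:Helm} and use $\nabla\times(\overrightarrow{\nabla}\times\varphi)=-\Delta\varphi$ for scalars $\varphi$: the scalars $w:=\nabla\times u$ on $\mathcal S$ and $W:=\nabla\times\tilde u$ on $B$ satisfy $\Delta w+\kappa^2q_0w=0$ in $\mathcal S$ and $\Delta W+\kappa^2W=0$ in $B$ (the latter real-analytic), while $u=(\kappa^2q_0)^{-1}\,\overrightarrow{\nabla}\times w$ in $\mathcal S$ and $\tilde u=\kappa^{-2}\,\overrightarrow{\nabla}\times W$ in $B$. The interface relations \eqref{TE} then translate into $w=W$ and $\partial_\nu w=q_0\,\partial_\nu W$ on $\Gamma_1\cup\Gamma_2$: a scalar transmission problem across two perpendicular edges with a jump in the normal derivative. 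It thus suffices to prove the corner-scattering claim: when $q_0\neq1$, this forces $w\equiv0$ in $\mathcal S$ and $W\equiv0$ in $B$ (after shrinking $B$). Granting the claim, $u=(\kappa^2q_0)^{-1}\overrightarrow{\nabla}\times w\equiv0$ in $\mathcal S$, so by \eqref{TE} the analytic field $W=\nabla\times\tilde u$ has vanishing Cauchy data on $\Gamma_1\cup\Gamma_2$; hence $W\equiv0$ in $B$ by Holmgren's theorem, $\tilde u=\kappa^{-2}\overrightarrow{\nabla}\times W\equiv0$ in $B$, and, by unique continuation, $\tilde u\equiv0$ in $G$ — so $\tilde u^{sc}=-u^{in}$ in $G$, a radiating field equal to a nontrivial plane wave, which is impossible.

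To prove the claim I would analyze $w$ and $W$ near the vertex $O$. First one must secure enough smoothness of $w$ up to the right-angled vertex — $w$ has analytic Cauchy data on each edge inherited from the analytic $W$, and successive odd reflections across $\{x_1=0\}$ and $\{x_2=0\}$ together with elliptic estimates control the behaviour at $O$. Then I would expand $w=\sum_{n\ge0}w_n$ and $W=\sum_{n\ge0}W_n$ into homogeneous components, with each $w_n$ equal to a particular term determined by $w_{n-2}$ plus a harmonic homogeneous polynomial $\alpha_n\mathrm{Re}(z^n)+\beta_n\mathrm{Im}(z^n)$, $z=x_1+ix_2$ (likewise for $W$ with $A_n,B_n$). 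Substituting these into the four scalar conditions $w=W$, $\partial_\nu w=q_0\partial_\nu W$ on $\{x_1=0\}$ and on $\{x_2=0\}$, and equating homogeneous degrees, produces for each $n$ a linear system for $(\alpha_n,\beta_n,A_n,B_n)$ whose inhomogeneity depends only on lower degrees. Because the two edges are \emph{perpendicular}, the traces of $\mathrm{Re}(z^m)$ on $\{x_1=0\}$ and on $\{x_2=0\}$ occur with opposite signs in the decisive degrees; combined with $q_0\neq1$, this makes the matching overdetermined at each even degree — forcing a lower-degree coefficient to vanish — while at each odd degree the leading coefficient is killed directly. An induction on the degree then gives $w_n=W_n=0$ for all $n$, hence $w\equiv0$ in $\mathcal S$ by analyticity, which proves the claim and the corollary. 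The very same local argument at an exposed vertex also yields Theorem~\ref{TH}: two distinct penetrable rectangles with equal far-field patterns would necessarily present such a right corner on the boundary of the unbounded component of the complement of their union.

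The step I expect to be the main obstacle is this corner analysis: verifying that the degree-by-degree matching across the two perpendicular edges is genuinely overdetermined at \emph{every} degree — the place where both the $\pi/2$ opening (rather than $\pi$) and the contrast $q_0\neq1$ are indispensable — and justifying the regularity of $w$ at the right-angled vertex on which the homogeneous expansion rests. The remaining ingredients (Rellich's lemma, interior analyticity and unique continuation for \eqref{eq:Helm}, Holmgren's theorem, and the non-radiating character of a plane wave) are standard.
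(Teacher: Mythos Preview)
Your reduction via Rellich, localisation at the exposed right corner, and final contradiction with the plane wave are exactly how the paper proceeds (the paper actually omits the proof of Corollary~\ref{Cor1.2}, saying it goes through just as Theorem~\ref{TH} does). The genuine divergence is in how you handle the local step at the corner: you pass to the scalar magnetic field $w=\nabla\times u$ and reduce to the transmission problem \eqref{TE-H}--\eqref{TE-Ht} with $\lambda=1/q_0\neq1$, whereas the paper deliberately stays with the vector field and proves the two-dimensional Maxwell analogue (Lemma~\ref{Lem:1}) by a Cartesian Taylor expansion of the divergence-free pair $(u,\tilde u)$. The two formulations are equivalent (your condition $\partial_\nu w=q_0\,\partial_\nu W$ is precisely what $\tau\cdot u=\tau\cdot\tilde u$ becomes after substituting $u=(\kappa^2 q_0)^{-1}\overrightarrow{\nabla}\times w$), so your corner claim is logically the same as Lemma~\ref{Lem:1}. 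The paper's motivation for the vector route is explicit: it is meant as a template for the full 3D Maxwell system, and the authors state that the jump in the normal derivative in \eqref{TE-Ht} makes the scalar route harder; your approach trades that generalisability for a scalar problem with fewer unknowns per degree, which may in fact make the induction lighter.

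Two points where your sketch is thinner than what is actually needed. First, regularity at the vertex: odd reflection across an edge preserves \emph{homogeneous} Dirichlet data, but here $w=W\neq0$ on the edges, so your reflections would have to be preceded by subtracting $W$ (giving an inhomogeneous Helmholtz equation with analytic right-hand side); the cleaner device --- and the one the paper uses --- is Cauchy--Kovalevskaya applied with the analytic Cauchy data inherited from $W$ on the full line $\{x_2=0\}\cap B$, which extends $w$ analytically past the origin in one stroke. Second, the ``overdetermination at every degree'' is exactly the content of the paper's Section~\ref{sec:proof}, and it is not a one-line observation: the paper needs a careful induction distinguishing odd and even total degree (Lemmas~\ref{lem:nm3,5}--\ref{lem:step4}), with explicit linear systems whose ranks have to be checked, and the coefficient at level $M$ is typically killed only by information coming from level $M+4$, not ``directly'' at its own degree. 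Your scalar version will require an analogous bookkeeping; the heuristic about signs of $\mathrm{Re}(z^m)$ on the two axes captures the right mechanism but does not by itself close the induction.
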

\begin{figure}[htbp]
\centering
 \scalebox{0.3}{\includegraphics{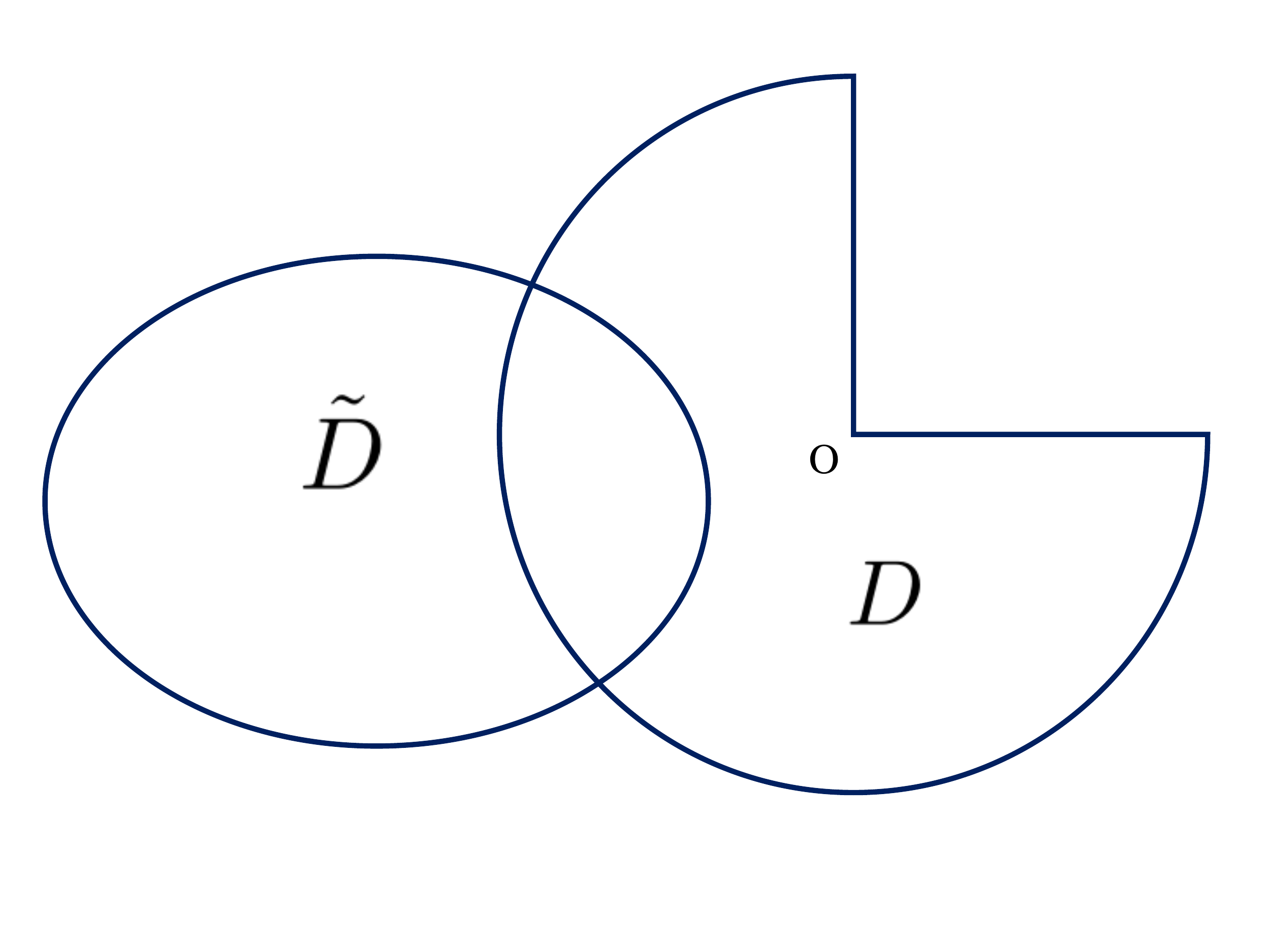}}
\caption{\label{F2}  Penetrable scatterers $D$ and $\tilde{D}$ cannot produce identical outgoing waves (far-field patterns) due to the right corner $O$ lying on the boundary of the unbounded component of $\R^2\backslash\overline{D\cup\tilde{D}}$. }
\end{figure}

As we shall see from the proof of Theorem \ref{TH}, a right corner scatters any incoming electric wave non-trivially
in the TE case, as stated in the following corollary.
\begin{corollary}\label{Cor1.3} Let $\Omega=D\times\R$  be an infinitely long penetrable scatterer of cylindrical type and $u^{in}$ be any incoming wave such that
\ben
\Delta u^{in}+\kappa^2  u^{in}=0,\quad \divv u^{in}=0\qquad \mbox{in a neighbourhood of $D$}.
\enn
If $\partial D$ possesses a right corner, then the scattered field $u^{sc}$
 corresponding to the transmission problem \eqref{eq:Helm}-\eqref{TE}
 cannot vanish identically in $D^e$. \end{corollary}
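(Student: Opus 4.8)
The plan is to reduce the TE Maxwell transmission problem to a scalar Helmholtz transmission problem, and then to derive a contradiction from an overdetermined local analysis at the right corner.

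\emph{Step 1 (scalar reduction).} Set $w:=\nabla\times u$. Rearranging \eqref{eq:Helm} gives $\overrightarrow{\nabla}\times w=\kappa^2 q\,u$; taking the scalar curl and using $\nabla\times(\overrightarrow{\nabla}\times\varphi)=-\Delta\varphi$ for scalars $\varphi$ yields
\[
\Delta w+\kappa^2 w=0\ \ \text{in}\ D^e,\qquad \Delta w+\kappa^2q_0\,w=0\ \ \text{in}\ D ,
\]
while $u=(\kappa^2q)^{-1}\,\overrightarrow{\nabla}\times w$ in each homogeneous region turns \eqref{TE} into $w^+=w^-$ and $\partial_\nu w^+=q_0^{-1}\,\partial_\nu w^-$ on $\partial D$. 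Likewise $w^{in}:=\nabla\times u^{in}$ solves $\Delta w^{in}+\kappa^2 w^{in}=0$ in a neighbourhood of $D$, and $w^{in}\not\equiv0$: otherwise $u^{in}$ would be curl- and divergence-free near $D$, hence locally a gradient of a harmonic function, which with $\Delta u^{in}+\kappa^2u^{in}=0$ forces $u^{in}\equiv0$. Since $\divv u^{in}=0$ we have $\overrightarrow{\nabla}\times(\nabla\times u^{in})-\kappa^2u^{in}=0$, so $u^{sc}$ solves this homogeneous equation in $D^e$, where consequently $u^{sc}=\kappa^{-2}\,\overrightarrow{\nabla}\times(w-w^{in})$. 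Hence $u^{sc}\equiv0$ in $D^e$ is equivalent to $w\equiv w^{in}$ in $D^e$, and it suffices to show the latter is impossible when $\partial D$ has a right corner.

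\emph{Step 2 (localization).} Assume, for contradiction, that $w\equiv w^{in}$ in $D^e$, and let $O$ be a right corner of $\partial D$. After a rigid motion take $O=0$ and, near $O$, $D=\{x_1>0,\ x_2>0\}$ with edges $\Gamma_1\subset\{x_2=0\}$ and $\Gamma_2\subset\{x_1=0\}$. Writing $v:=w|_D$, the interface conditions together with $w=w^{in}$ off $D$ give $v=w^{in}$ and $\partial_\nu v=q_0\,\partial_\nu w^{in}$ on $\Gamma_1\cup\Gamma_2$, and $\Delta v+\kappa^2q_0\,v=0$ near $O$ in $D$. Because the opening angle is $\pi/2$, the corner exponents of the Dirichlet problem for $\Delta+\kappa^2q_0$ on this quadrant are the even integers $2m$ and the associated corner functions are real-analytic; hence $v$, like $w^{in}$, is real-analytic up to $O$, and we may expand $v=\sum a_{ij}x_1^ix_2^j$, $w^{in}=\sum c_{ij}x_1^ix_2^j$ at $O$.

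\emph{Step 3 (the corner forces $w^{in}$ to vanish).} Put $\psi:=v-w^{in}$ near $O$. Then $\psi=0$ on $\Gamma_1\cup\Gamma_2$, $\partial_2\psi=(q_0-1)\,\partial_2 w^{in}$ on $\Gamma_1$, $\partial_1\psi=(q_0-1)\,\partial_1 w^{in}$ on $\Gamma_2$, and $\Delta\psi+\kappa^2q_0\,\psi=\kappa^2(1-q_0)\,w^{in}$ near $O$. The Cauchy data of $\psi$ on $\Gamma_1$, together with the equation, determine every Taylor coefficient of $\psi$ at $O$; so do the Cauchy data on $\Gamma_2$. Demanding that the two determinations coincide, and using the Helmholtz recursion satisfied by the $c_{ij}$, produces a cascade of linear relations. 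Already matching the coefficient of $x_1^2$ in $\psi$ — which is $0$ by the $\Gamma_1$ data, and equals $\tfrac12\kappa^2(1-q_0)c_{00}$ by the $\Gamma_2$ data and the equation — forces $c_{00}=0$ since $q_0\neq1$; continuing, $c_{10}=c_{01}=0$, then $c_{20}=c_{11}=c_{02}=0$, and, by induction on the total degree, $c_{ij}=0$ for all $i,j$. Hence $w^{in}\equiv0$ near $O$, so by analytic continuation $w^{in}\equiv0$ in the connected neighbourhood of $D$ on which it is defined, i.e.\ $u^{in}\equiv0$ — a contradiction. Therefore $u^{sc}$ cannot vanish identically in $D^e$.

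\emph{Main obstacle.} Step 3 is the heart of the matter, and it is where a \emph{right} corner is essential. In contrast with the classical ``corners always scatter'' situation, only the Dirichlet interface condition is continuous here while the Neumann one carries the nontrivial jump $q_0^{-1}\neq1$; thus $\psi=v-w^{in}$ does \emph{not} have vanishing Cauchy data, and the standard device — a nonzero source tested against an exponentially decaying complex‑geometric‑optics solution $e^{-\mu\cdot x}$ with $\mu\cdot\mu=0$ concentrating at $O$ — does not close on its own. What rescues it is playing the two perpendicular edges against each other: the $\pi/2$ opening makes the two Cauchy‑data propagations compatible only through a rigid chain of constraints on the Taylor jet of $w^{in}$ at $O$, and the factor $q_0-1\neq0$ makes each constraint nondegenerate, so the induction terminates with $w^{in}\equiv0$. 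Making this cascade explicit, and verifying the analytic regularity of $v$ up to the right corner, are the technical points; both are supplied by the local analysis in the proof of Theorem~\ref{TH}, from which Corollary~\ref{Cor1.3} follows by taking $u^{in}$ itself as the incident field at the corner.
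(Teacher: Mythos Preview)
Your overall strategy is sound, but you have taken an unnecessary detour and left the decisive step unproved. The paper argues Corollary~\ref{Cor1.3} directly in the vector setting: if $u^{sc}\equiv 0$ in $D^e$ then near the right corner $O$ the interior field $u|_D$ and the incoming field $u^{in}$ are both analytic, divergence-free Helmholtz solutions (with wavenumbers $\kappa^2 q_0$ and $\kappa^2$) whose Cauchy data $(\tau\cdot\,\cdot\,,\nabla\times\,\cdot\,)$ agree on the two edges; after extending $u|_D$ across each edge by Cauchy--Kovalevskaya, Lemma~\ref{Lem:1} applies verbatim and yields $u^{in}\equiv 0$. Your scalar reduction in Step~1 is correct, but it trades a situation where Lemma~\ref{Lem:1} applies directly for one with a \emph{jumping} Neumann trace $\partial_\nu v=q_0\,\partial_\nu w^{in}$ --- precisely the difficulty the paper flags when it says the vector conditions \eqref{TE} ``can be more easily handled \ldots\ than the transmission conditions \eqref{TE-Ht} with $\lambda\neq 1$.''

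Two concrete gaps remain. In Step~2, the analyticity of $v=w|_D$ up to the corner does not follow from ``Dirichlet corner exponents being even integers'': you are not solving a boundary-value problem but an overdetermined Cauchy problem. The right mechanism, as in the paper, is Cauchy--Kovalevskaya on each edge separately (where the data $(w^{in},q_0\,\partial_\nu w^{in})$ are analytic); the two extensions agree on the open quadrant and patch to an analytic function in a full disc about $O$. In Step~3 you only check $c_{00}=0$ and then assert the induction. That induction is exactly the content of Section~\ref{sec:proof}: via $f=\nabla\times u$, $g=\nabla\times v$ one sees that Lemma~\ref{Lem:1} is \emph{equivalent} to your scalar claim (the condition $\tau\cdot u=\tau\cdot v$ becomes $\partial_\nu f=(q_1/q_2)\,\partial_\nu g$), so you may close Step~3 by citing Lemma~\ref{Lem:1} --- but then the scalar detour has bought nothing. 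If instead you want an independent scalar argument, you owe the full cascade; already $c_{1,1}$ requires combining the two Helmholtz recursions at $(1,1)$, and $c_{2,0},c_{0,2}$ require coupling the recursions at $(2,0),(0,2)$ with the one at $(0,0)$, so the linear systems are of the same intricacy as those in Section~\ref{sec:proof}.
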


 It is easy to observe that, in the TE polarization case, the magnetic field is of the form 
$H(\textbf{x})=(0, 0, v(x))$, where the scalar function $v:=1/(i\kappa)\nabla\times u$ is governed by the Helmholtz equation
\be\label{TE-H}
\Delta v+\kappa^2  q\; v=0\quad\mbox{in}\quad \R^2,
\en together with the transmission conditions
\be\label{TE-Ht}
v^+=v^-,\quad \partial_\nu v^+=\lambda\; \partial_\nu v^-\quad\mbox{on}\quad \partial D.
\en
Here $\lambda:=1/q_0\neq 1$ and $v=v^{in}+v^{sc}$ in $\R^2\backslash\overline{D}$, with
 \ben
 v^{in}(x)=1/(i\kappa)\nabla\times u^{in}(x)=-e^{i\kappa x\cdot d},\quad  v^{sc}(x)=1/(i\kappa)\nabla\times u^{sc}(x).
 \enn
 One can derive from (\ref{eq:radiation}) that $v^{sc}$ fulfills the two-dimensional Sommerfeld radiation condition
 \ben
 \lim_{r\rightarrow \infty} \sqrt{r}\,\left( \partial_r v^{sc}-i\kappa v^{sc} \right)=0,\quad r=|x|.
 \enn
 In the current paper we do not consider the equivalent scalar transmission problem
(\ref{TE-H})-(\ref{TE-Ht}), because the subsequent analysis
for the two-dimensional Maxwell equation \eqref{eq:Helm}-(\ref{TE}) would provide us more insights
into a possible approach for treating the full Maxwell system in three dimensions.
The 3D case appears much more challenging than the planar case, and is
still an open fundamental problem in inverse medium scattering problems.
On the other hand, the transmission conditions in (\ref{TE}) keep the continuity of the Cauchy data $(\nabla\times u, \tau\cdot u)$ of the 2D Maxwell system across the interface $\partial D$.
These conditions can be more easily handled by our approach than
the transmission conditions (\ref{TE-Ht}) with $\lambda\neq 1$.
The transmission problem (\ref{TE-H})-(\ref{TE-Ht}) with $\lambda=1$
corresponds to the TM (transverse magnetic) polarization of our scattering problem, for which
the electric and magnetic fields are of the form
\ben
E({\rm \textbf{x}})=(0,0, v(x)),\quad H({\rm \textbf{x}})=(u_1(x),u_2(x),0),\quad {\rm \textbf{x}}=(x,0)\in \R^3.
\enn
We refer to \cite{BLS,HSV, ElHu2015, EH2017,PSV} for recent studies on inverse transmission problems of the scalar Helmholtz equation with $\lambda=1$ (i.e., the refractive index $q$ is continuous or has no jumps
across the interface between the homogeneous and inhomogeneous media), not only in two dimensions but also in higher dimensions. It was shown that, under the assumption $\lambda=1$, the global and local uniqueness results
as stated in Theorem \ref{TH} and Corollary \ref{Cor1.2} remain valid for curvilinear polygonal and polyhedral scatterers with variable potential functions (see \cite{EH2017}), and that one-dimensional interfaces with even a "weakly" singular point always scatter incoming waves non-trivially (see \cite{LHY}).
The arguments in these references do not apply to our current case of $\lambda\neq 1$, because the jumps of the normal derivatives would bring essential difficulties.
To the best of our knowledge, no uniqueness results with a single incoming wave are available for
$\lambda\neq 1$ and piecewise constant $q(x)$.
It is worth noting that the case $q\equiv q_0$ in $\R^2$ (that is, the wave speed remains constant in the whole space) was verified by Ikehata in \cite{Ikehata} for convex penetrable polygonal obstacles, as a byproduct of the enclosure method. In this paper we investigate the more practical case that $q$ is a piecewise constant function.
Our results apply automatically to the Helmholtz system (\ref{TE-H})-(\ref{TE-Ht}), since it is equivalent to the two dimensional Maxwell system \eqref{eq:Helm}-(\ref{TE}).
A comparison of the essential differences between our
arguments and results with the ones in the recent paper \cite{LX2017} on electromagnetic corner scattering will be made in Remark \ref{Rem} of Section\,\ref{sec:2}.

\begin{remark}
The unique solvability of the scattering problem \eqref{eq:Helm}, \eqref{TE} and \eqref{eq:radiation}
follows from that of (\ref{TE-H})-(\ref{TE-Ht}). In fact, applying the integral equation method or the variational approach one can prove that  the scalar problem (\ref{TE-H})-(\ref{TE-Ht}) admits a unique solution $v\in H^1_{loc}(\R^2)$ (see e.g., \cite{CK} and \cite[Chapter 5]{Cakoni}). This implies that the original scattering  problem \eqref{eq:Helm}, \eqref{TE} and \eqref{eq:radiation} has a unique solution
 \ben
 u\in X:=\{u\in L^2_{loc}(\R^2)^2,\quad \nabla\times u\in  H^1_{loc}(\R^2)\}.
 \enn
 Since $q$ is piecewise constant, by elliptic interior regularity (see \cite{Gil-Tru})  it is easy to see that $u$ is analytic in both $D$ and $D^e$.
\end{remark}

Let us also mention that, if $\Omega\subset \R^3$ is a bounded perfectly conducting obstacle of polyhedral type, its geometrical shape  $\partial\Omega$ can be uniquely determined by a single electric far-field pattern $E^{\infty}( \hat{{\rm x}})$ for all $\hat{{\rm x}}\in \s^2$; we refer to
\cite{Zou07, Zou09} for the analysis based on the reflection principle for the full Maxwell system.
For general penetrable and impenetrable scatterers, uniqueness in shape identification and medium recovery can be proved if the far-field patterns for all incident directions and polarization vectors with a fixed wavenumber are available; see \cite[Chapter 7.1]{CK}, \cite{Hahner}, \cite{Kress1, Kress2, Sun1992} and references therein.

The remaining part of this paper is organized as follows. In the subsequent Section \ref{sec:2}, we prove Theorem \ref{TH} and Corollaries \ref{Cor1.2} and \ref{Cor1.3}, all of which are essentially built upon the results in
Lemma \ref{Lem:1}. The proof of Lemma \ref{Lem:1} will be given in Section \ref{sec:proof} via induction arguments.

\section{Proofs of the main results}\label{sec:2}
In this section we prove our main results stated in the previous section, namely,
Theorem \ref{TH} and Corollaries \ref{Cor1.2}-\ref{Cor1.3}.
For this, we need a fundamental auxiliary result, whose proof is postponed
to Section \ref{sec:proof}.
\begin{lemma}\label{Lem:1}
Let $B=\{x: |x|<1\}$, $\Gamma=\{(x_1,0): x_1\geq 0\} \cup \{(0,x_2): x_2\geq 0\}$. Then for
any two distinct constants $q_1$ and $q_2$ in $\C$,
the solutions $u$ and $v$ to the vector-valued Helmholtz equations
\begin{equation}\label{eq:transmission}
\left\{ \begin{array}{lll}
\Delta u+q_1 u=0 &&\mbox{in}\quad B,\\
\Delta v+q_2 v=0 &&\mbox{in}\quad B,\\
{\rm div}\,u={\rm div}\,v=0&&\mbox{in}\quad B, \\
\tau\cdot u=\tau\cdot v && \mbox{on}\quad B\cap \Gamma,\\
\nabla\times u=\nabla\times v && \mbox{on}\quad B\cap\Gamma
\end{array}
\right.
\end{equation}
vanish identically in the unit ball $B$.
\end{lemma}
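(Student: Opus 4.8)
The plan is to exploit the corner singularity at the origin, where $\Gamma$ has a right-angle vertex. The key idea is to expand both $u$ and $v$ in series that are adapted to the geometry near $O$, match them across the two half-lines making up $\Gamma$, and show that the transmission conditions force all expansion coefficients to vanish. Since $\mathrm{div}\,u=0$ and $\Delta u + q_1 u = 0$, each Cartesian component $u_j$ satisfies $\Delta u_j + q_1 u_j = 0$, so componentwise we may use the classical Vekua-type expansions (Jacobi–Anger / Fourier–Bessel series) around the origin: $u$ is a locally convergent series in terms of $J_n(\sqrt{q_1}\,r)e^{\pm in\theta}$, and similarly $v$ in terms of $J_n(\sqrt{q_2}\,r)e^{\pm in\theta}$. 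The divergence-free condition couples the two scalar components, so it is cleaner to work with the scalar quantities that actually appear in the boundary conditions, namely $w:=\nabla\times u$ and the tangential trace, and to observe that $w$ itself solves $\Delta w + q_1 w = 0$ in $B$ (take curl of the Helmholtz equation and use $\mathrm{div}\,u=0$), and likewise $\tilde w:=\nabla\times v$ solves $\Delta\tilde w + q_2\tilde w=0$.

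First I would set up the two half-line interface conditions separately. On the positive $x_1$-axis, $\tau=(0,\pm1)$ so $\tau\cdot u = u_2$, and the conditions read $u_2=v_2$ and $\partial_1 u_2-\partial_2 u_1 = \partial_1 v_2-\partial_2 v_1$ for $x_1\ge 0$; on the positive $x_2$-axis, $\tau=(\pm1,0)$ so $\tau\cdot u = u_1$, and the conditions read $u_1=v_1$ and the same curl matching for $x_2\ge 0$. Next I would use the fact that $u,v$ are real-analytic up to $\Gamma$ from each side and that, along a full line through the origin, a solution of a Helmholtz-type equation is determined by its Cauchy data; combined with unique continuation (the Helmholtz operator has the unique continuation property), the equality of $u_2$ and $v_2$ on the \emph{ray} $x_1\ge 0$, together with equality of a derivative quantity, should be leveraged via a reflection/symmetrisation argument across each axis to upgrade "agreement on a ray" into agreement on the whole corresponding axis, and eventually into a relation that constrains the Bessel coefficients at the vertex. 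The engine here is the order-of-vanishing argument at $O$: expand $u-$(its reflection) and compare leading homogeneous harmonic polynomials, exactly as in the corner-scattering literature; the mismatch $q_1\ne q_2$ prevents the two Bessel factors $J_n(\sqrt{q_1}r)$ and $J_n(\sqrt{q_2}r)$ from ever matching to all orders unless the common coefficient is zero.

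Concretely, the induction (which is deferred to Section~\ref{sec:proof}) will run on the total degree $N$ of the Taylor expansions at $O$: assuming $u$ and $v$ vanish to order $N$ at the origin, I would plug the degree-$N$ homogeneous parts into the four scalar interface identities restricted to the two rays, use the structure of harmonic (or, more precisely, $\Delta(\cdot)=O(r^{N-1})$) homogeneous polynomials of degree $N$ in the variables $(x_1,x_2)$, and the divergence-free constraint, to deduce that these degree-$N$ parts must themselves vanish; then the convergent Vekua series forces $u\equiv v\equiv 0$ in $B$. The main obstacle, and the reason an induction is needed rather than a one-shot lowest-order computation, is that a right corner has \emph{two} distinct interface rays with \emph{different} tangential directions, so at each homogeneous order one gets a coupled linear system mixing the coefficients of $u_1,u_2$ (and of $v_1,v_2$) from the two rays; showing this system is nonsingular — i.e. that a nonzero homogeneous solution cannot satisfy all four conditions on $\Gamma\cap B$ simultaneously, for \emph{every} degree $N$ — is the crux, and it is precisely where the right-angle hypothesis (and $q_1\ne q_2$) is used. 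The divergence constraint is essential: without it the system would have a kernel, and indeed a straight interface (as opposed to a corner) does not determine the scatterer.
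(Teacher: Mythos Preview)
Your high-level strategy --- expand at the corner, run an induction on the total degree, and check that a linear system is nonsingular at each step --- is exactly the skeleton of the paper's argument. But two of the concrete mechanisms you propose do not work, and the gap matters.

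First, the reflection/symmetrisation step (``upgrade agreement on a ray to agreement on the whole axis via unique continuation'') is not available: unique continuation for the Helmholtz equation requires Cauchy data on a hypersurface, and a half-line is not one. The paper never attempts this; it works directly with the ray conditions, which translate into the explicit Taylor constraints $a^{(1)}_{n,0}=a^{(2)}_{0,m}=0$ and $(n+1)a^{(2)}_{n+1,0}=a^{(1)}_{n,1}$, $a^{(2)}_{1,m}=(m+1)a^{(1)}_{0,m+1}$ on the coefficients of $w=u-v$.

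Second, and more seriously, your induction hypothesis ``$u$ and $v$ vanish to order $N$'' cannot get started. At the lowest nonvanishing order the leading homogeneous parts of $u$ and $v$ are \emph{harmonic} and divergence-free, and the constants $q_1,q_2$ do not appear there at all; the transmission conditions at that order can only force the leading parts of $u$ and $v$ to \emph{coincide}, not to vanish (indeed, for $q_1=q_2$ any $u=v$ is a solution). The paper circumvents this by working with $w=u-v$ throughout: from the two Helmholtz recursions it extracts a single \emph{fourth-order} recursion for the coefficients of $w$ (equation~\eqref{eq:3}), in which $q_1+q_2$ and $q_1q_2$ appear, and proves $a^{(j)}_{n,m}=0$ for all $n,m$ by an intricate induction that treats odd and even $M=n+m$ separately, first parametrising the order-$M$ coefficients and then using the fourth-order recursion to force the parameter $\eta_{M-4}$ from four levels below to vanish. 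Only after $w\equiv 0$ is established does the hypothesis $q_1\neq q_2$ enter, via the identity $v^{(j)}_{n,m}=(q_2-q_1)^{-1}[\cdots]$, to conclude $v\equiv 0$ and then $u\equiv 0$. Your Bessel-matching heuristic (``$J_n(\sqrt{q_1}r)$ and $J_n(\sqrt{q_2}r)$ cannot match to all orders'') gestures at this, but the actual place where $q_1\neq q_2$ is used is much more specific and comes only at the very end.
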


Lemma \ref{Lem:1} shows a local property of the two-dimensional Maxwell system around a domain with
a right corner: the Cauchy data of such two Maxwell equations cannot coincide on an interface with
a right corner, if the wavenumbers involved are not identical.

With the help of Lemma \ref{Lem:1}, we can now establish our main results in Section \ref{sec:introduction}.
We will provide a detailed proof of Theorem \ref{TH}, but omit the proofs of
Corollaries \ref{Cor1.2} and \ref{Cor1.3} as they can be verified basically in the same manner as it is done
below for Theorem \ref{TH}.

To prove Theorem \ref{TH}, we assume that $D$ and $\tilde{D}$  are the cross-sections of two infinitely long rectangular penetrable scatterers whose scattered fields are denoted by $u^{sc}$ and $\tilde{u}^{sc}$, respectively.
Analogously, we
let $u^\infty$ and $\tilde{u}^\infty$ be the far-field patterns of $u^{sc}$ and $\tilde{u}^{sc}$.
Supposing that  $u^\infty(\hat{x})=\tilde{u}^\infty(\hat{x})$ for all $\hat{x}\in \s$, we need to show $D=\tilde{D}$.
Assume on the contrary that $D\neq\tilde{D}$, then we derive a contradiction below.

We first apply Rellich's lemma (see \cite{CK}) to obtain
\[
u^{sc}=\tilde{u}^{sc}\quad\mbox{in}\quad D^e\cap \tilde{D}^e.
\]
If $D\neq \tilde{D}$, one can always find a right corner $O$ and a small number $\epsilon>0$ such that either
 $O\in\partial D$ and $B_\epsilon(O)\subset \tilde{D}^e$, or $O\in\partial \tilde{D}$ and  $B_\epsilon(O)\subset D^e$. Without loss of generality, we suppose that the former case holds;
 see Figure \ref{F1}. Note that if $D\cap\tilde{D}=\emptyset$, one can easily derive a contradiction by extending the scattered field to the whole space and then applying Rellich's lemma.

\begin{figure}[htbp]
\centering
 \scalebox{0.3}{\includegraphics{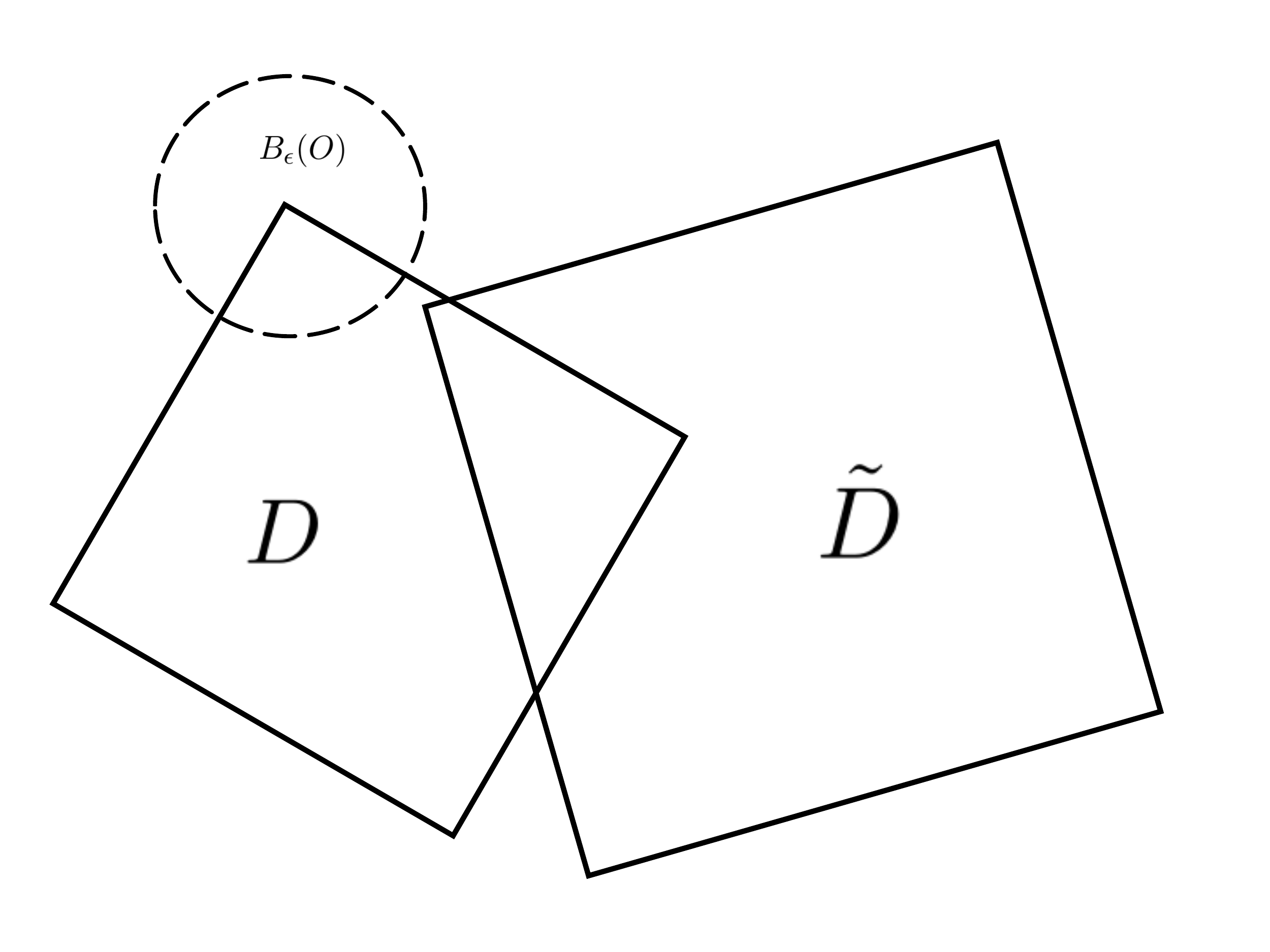}}
\caption{\label{F1}  Two rectangular penetrable scatterers with the same far-field data.}
\end{figure}

Since the Maxwell system  is invariant by both rotation and translation, we may suppose, without loss of generality,
that the corner $O$ coincides with the origin and
that the two sides of this right corner lie on the positive $ox_1$ and
$ox_2$ axes, respectively. Then we have the following coupled system
\ben
\left\{ \begin{array}{lll}
\overrightarrow{\nabla} \times (\nabla \times u)-\kappa^2  q_0 u=0\quad&&\mbox{in}\quad B_\epsilon(O)\cap D,\\
\overrightarrow{\nabla} \times (\nabla \times \tilde{u})-\kappa^2  \tilde{u}=0\quad&&\mbox{in}\quad B_\epsilon(O),\\
\tau\cdot u=\tau\cdot \tilde{u} &&\mbox{on}\quad  B_\epsilon(O)\cap \partial D,\\
\nabla\times u= \nabla\times\tilde{u}&&\mbox{on}\quad  B_\epsilon(O)\cap \partial D.
\end{array}\right.
\enn
Since $\tilde{u}$ is analytic in $B_\epsilon(O)$ and the interface $B_\epsilon(O)\cap\partial D$  is piecewise analytic,
 the Cauchy pair $(\tau\cdot u, \nabla\times u)$ is piecewise analytic on
the boundary $B_\epsilon(O)\cap\partial D$.
Recalling the Cauchy-Kovalevskaya theorem, one may extend $u$ analytically from
 $\overline{D}\cap B_\epsilon(O)$ to a small neighborhood of the corner $O$ in
 the exterior domain
$D^e\cap B_\epsilon(O)$.  For notational convenience, we still denote by $B_\epsilon(O) $ the extended domain. Further, the extended function, which we still denote by $u$,  satisfies the Maxwell equation
\ben
\overrightarrow{\nabla} \times (\nabla \times u)-\kappa^2  q_0 u=0\quad\mbox{in}\quad B_\epsilon(O).
\enn


Using the relation that $\overrightarrow{\nabla} \times (\nabla \times u)=-\Delta u + \nabla (\nabla\cdot u)$,
we may apply Lemma \ref{Lem:1} to $u$ and $\tilde{u}$  with $q_1=\kappa^2q_0$, $q_2=\kappa^2$ to deduce that
both $u$ and $\tilde{u}$ vanish in $B_\epsilon(O)$, where we have used
the assumption that $q_0\neq 1$. By the unique continuation of the Helmholtz equation, we have $u\equiv 0$ in $\R^2$.
This is a contradiction to the fact that $|u^{in}|=1$ in $\R^2$ and $|u^{sc}|$ decays as $|x|$ tends to infinity. Hence, we have shown that $D=\tilde{D}$, and complete the proof of Theorem \ref{TH}.

\begin{remark}\label{Rem}
We think that it is possible to prove Lemma \ref{Lem:1} for the full Maxwell system in a cuboid domain in three dimensions. This was verified in \cite{LX2017} by using the CGO solutions
for an admissible set of electric and magnetic fields which contain both electromagnetic planar waves and dipoles.
Using this result,
it was proved in \cite{LX2017} that the scattered fields do not vanish
within the admissible set.
But we emphasize that this result can not lead to any uniqueness result to
the inverse scattering problem with a single incoming wave.
In this work, we are able to demonstrate that any solution to
the transmission problem of the two dimensional Maxwell system of the form
in Lemma \ref{Lem:1} must vanish.
This excludes the inadmissible set considered in \cite{LX2017} for the TE case,
hence help us establish the unique identifiability for the inverse medium scattering
with a single incoming wave. Our studies show that the uniqueness issue is more difficult
than the corner scattering problems that try to justify the non-vanishing of the scattered fields.
\end{remark}

\section{Proof of Lemma \ref{Lem:1}}\label{sec:proof}
We shall make full use of the expansion of the solutions to
the Helmholtz equation in the Cartesian coordinate system to prove Lemma \ref{Lem:1}.
We find the expansion in Cartesian coordinates
particularly convenient for verifying Lemma \ref{Lem:1} in domains with a right corner.
The expansion in Cartesian coordinates was proved to be an effective approach
for the scalar Helmholtz equation \cite{ElHu2015}, but we encountered
essential technical difficulties in our efforts to apply this approach
to the current vector-valued Helmholtz equations.

Since $u$ and $v$ are the solutions to the Helmholtz equation with constant potentials, the functions $u$ and $v$ are both analytic in $B$. Hence $u$ and $v$ can be expanded in Taylor expansion:
\begin{eqnarray}
&&u=\sum_{n\ge0} \sum_{m \ge0} U_{n,m}\, x_1^n x_2^m
:=\sum_{n\ge0} \sum_{m \ge0} (u_{n,m}^{(1)}, u_{n,m}^{(2)})\, x_1^n x_2^m\,,
\label{eq:Unm} \\
&&v=\sum_{n\ge0} \sum_{m \ge0} V_{n,m}\, x_1^n x_2^m
:=\sum_{n\ge0} \sum_{m \ge0} (v_{n,m}^{(1)}, v_{n,m}^{(2)})\, x_1^n x_2^m
\label{eq:Vnm}
\end{eqnarray}
%
for $U_{n,m}=(u_{n,m}^{(1)}, u_{n,m}^{(2)})\in \C^2$ and
$V_{n,m}=(v_{n,m}^{(1)}, v_{n,m}^{(2)})\in \C^2$. By plugging the expansions into the
Helmholtz equations in \eqref{eq:transmission}, we easily derive the relations satisfied by
$U_{n,m}$ and $V_{n,m}$:
\be\label{eq:1}\begin{split}
&(n+1)(n+2) U_{n+2,m}+(m+1)(m+2)U_{n,m+2}+q_1 U_{n,m}=0,\\
&(n+1)(n+2) V_{n+2,m}+(m+1)(m+2)V_{n,m+2}+q_2 V_{n,m}=0.
\end{split}
\en
Let $A_{n,m}:=U_{n,m}-V_{n,m}=(a_{n,m}^{(1)}, a_{n,m}^{(2)})$.
Then it is easy to see the difference $w=u-v$ admits the Taylor expansion
\be\label{eq:anm}
w=\sum_{n\ge0} \sum_{m \ge0} A_{n,m}\, x_1^n x_2^m
=\sum_{n\ge0} \sum_{m \ge0} (a_{n,m}^{(1)}, a_{n,m}^{(2)})\, x_1^n x_2^m
\quad\mbox{in}\quad B
\en
and satisfies the equation
\be\label{eq:w}
\Delta w+ q_1 w=(q_2-q_1)v\quad\mbox{in}\quad B.
\en

We first derive some important relations for the coefficients $a_{n,m}^{(j)}$ in \eqref{eq:anm}.
\begin{lemma}\label{lem:all_anm}
For $j=1,2$ and all $n,m \geq 0$, the coefficients $a_{n,m}^{(j)}$ in \eqref{eq:anm} satisfy
\begin {equation}
\begin{split}
&(m+4)(m+3)(m+2)(m+1)a^{(j)}_{n,m+4}
+(n+4)(n+3)(n+2)(n+1)a^{(j)}_{n+4,m}\\
&+2(n+2)(n+1)(m+2)(m+1)a^{(j)}_{n+2,m+2}
+(q_1+q_2)(n+2)(n+1)a^{(j)}_{n+2,m}\\
&+(q_1+q_2)(m+2)(m+1)a^{(j)}_{n,m+2}
+q_2q_1 a^{(j)}_{n,m}=0\,,
\end{split}
\label{eq:3}
\end{equation}
%
%
\begin{equation}
(n+1)a^{(1)}_{n+1,m}=-(m+1)a^{(2)}_{n,m+1}\,,
\label{eq:4}
\end{equation}
\begin{equation}
a^{(1)}_{n,0}=0,\quad  a^{(2)}_{0,m}=0\,,
\label{eq:5}
\end{equation}
\begin{equation}
(n+1)a^{(2)}_{n+1,0}=a^{(1)}_{n,1}, \ a^{(2)}_{1,m}=(m+1)a^{(1)}_{0,m+1}\,.
\label{eq:6}
\end{equation}
\end{lemma}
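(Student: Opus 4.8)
The plan is to derive each of the four families of relations directly from the defining equations in \eqref{eq:transmission}, using the Taylor expansion \eqref{eq:anm} of $w=u-v$ together with \eqref{eq:1}. Relations \eqref{eq:4}, \eqref{eq:5}, \eqref{eq:6} are first-order in nature and should follow immediately from the divergence-free conditions and the two interface conditions, while \eqref{eq:3} is the fourth-order recursion that encodes compatibility with both Helmholtz equations simultaneously; this last one is where the real work lies.

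First I would establish \eqref{eq:4}. Since $\divv u=\divv v=0$ in $B$, we have $\divv w=\partial_1 w^{(1)}+\partial_2 w^{(2)}=0$. Substituting \eqref{eq:anm} and matching the coefficient of $x_1^n x_2^m$ in the identity $\sum (n+1)a^{(1)}_{n+1,m}x_1^n x_2^m+\sum (m+1)a^{(2)}_{n,m+1}x_1^n x_2^m=0$ gives $(n+1)a^{(1)}_{n+1,m}+(m+1)a^{(2)}_{n,m+1}=0$, which is exactly \eqref{eq:4}. Next, for \eqref{eq:5} I would use the interface conditions on $B\cap\Gamma$. On the horizontal ray $\{x_2=0,\,x_1\ge0\}$ the tangent is $\tau=(1,0)$, so $\tau\cdot u=\tau\cdot v$ reads $w^{(1)}(x_1,0)=0$ for all small $x_1\ge0$; by analyticity $w^{(1)}(x_1,0)\equiv0$ as a power series in $x_1$, hence $a^{(1)}_{n,0}=0$ for all $n$. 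Likewise on the vertical ray $\{x_1=0,\,x_2\ge0\}$ the tangent is $(0,1)$, giving $w^{(2)}(0,x_2)\equiv0$ and thus $a^{(2)}_{0,m}=0$. For \eqref{eq:6} I would use the second interface condition $\nabla\times u=\nabla\times v$, i.e. $\partial_1 w^{(2)}-\partial_2 w^{(1)}=0$ on $B\cap\Gamma$. Restricting to $x_2=0$: $\sum_n\big((n+1)a^{(2)}_{n+1,0}-a^{(1)}_{n,1}\big)x_1^n=0$, whence $(n+1)a^{(2)}_{n+1,0}=a^{(1)}_{n,1}$; restricting to $x_1=0$ similarly yields $a^{(2)}_{1,m}-(m+1)a^{(1)}_{0,m+1}=0$, i.e. $a^{(2)}_{1,m}=(m+1)a^{(1)}_{0,m+1}$.

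The fourth-order relation \eqref{eq:3} is the main obstacle and requires eliminating $v$ from \eqref{eq:w}. The cleanest route is to apply the operator $(\Delta+q_2)$ to \eqref{eq:w}: since $(\Delta+q_2)v=0$, we get $(\Delta+q_2)(\Delta+q_1)w=0$ in $B$, componentwise. Expanding $(\Delta+q_2)(\Delta+q_1)=\Delta^2+(q_1+q_2)\Delta+q_1q_2$ and writing $\Delta^2 w=\partial_1^4 w+2\partial_1^2\partial_2^2 w+\partial_2^4 w$, I would substitute \eqref{eq:anm} for $w^{(j)}$ and read off the coefficient of $x_1^n x_2^m$. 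The term $\partial_1^4 w^{(j)}$ contributes $(n+4)(n+3)(n+2)(n+1)a^{(j)}_{n+4,m}$; $\partial_2^4 w^{(j)}$ contributes $(m+4)(m+3)(m+2)(m+1)a^{(j)}_{n,m+4}$; the cross term $2\partial_1^2\partial_2^2 w^{(j)}$ contributes $2(n+2)(n+1)(m+2)(m+1)a^{(j)}_{n+2,m+2}$; the term $(q_1+q_2)\Delta w^{(j)}$ contributes $(q_1+q_2)\big[(n+2)(n+1)a^{(j)}_{n+2,m}+(m+2)(m+1)a^{(j)}_{n,m+2}\big]$; and $q_1q_2 w^{(j)}$ contributes $q_1q_2 a^{(j)}_{n,m}$. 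Summing and equating to zero is precisely \eqref{eq:3}. An equivalent but slightly longer alternative, which I would mention as a consistency check, is to subtract the two recursions in \eqref{eq:1} applied to $U$ and $V$ after expressing $V_{n,m}=U_{n,m}-A_{n,m}$ and iterating once more; the operator-composition argument is more transparent and less error-prone. The only subtlety to flag is that all these manipulations are justified because $u$, $v$, hence $w$, are genuinely analytic in $B$ (they solve constant-coefficient elliptic equations), so term-by-term differentiation of the Taylor series and coefficient matching are legitimate.
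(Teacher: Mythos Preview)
Your proof is correct and follows essentially the same approach as the paper: \eqref{eq:4} from $\divv w=0$, \eqref{eq:5}--\eqref{eq:6} from the two transmission conditions restricted to each ray of $\Gamma$, and \eqref{eq:3} from eliminating $v$ between the two Helmholtz recursions. The only cosmetic difference is that for \eqref{eq:3} the paper solves \eqref{eq:w} for $v^{(j)}_{n,m}$ and substitutes into the second line of \eqref{eq:1}, whereas you apply $(\Delta+q_2)$ directly to \eqref{eq:w}; as you yourself note, these are equivalent, and your operator-composition version is arguably the cleaner of the two.
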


\begin{proof}
Using the expression \eqref{eq:anm} and
the equation \eqref{eq:w}, we derive by direct computing that
the coefficients $a^{(j)}_{n,m}$ fulfill the recursive relations
\ben
(n+1)(n+2) a^{(j)}_{n+2,m}+(m+1)(m+2)a^{(j)}_{n,m+2}+q_1 a^{(j)}_{n,m}=(q_2-q_1)v^{(j)}_{n,m}
\enn
for $j=1,2$, or equivalently, the coefficients $v^{(j)}_{n,m}$ satisfy that for all $n, m\geq 0$,
\be\label{eq:2}
v^{(j)}_{n,m}=\frac{1}{q_2-q_1}\left[(n+1)(n+2) a^{(j)}_{n+2,m}+(m+1)(m+2)a^{(j)}_{n,m+2}+q_1 a^{(j)}_{n,m}\right].
\en
Then the desired results in \eqref{eq:3} follow by
inserting (\ref{eq:2}) in the second equation of (\ref{eq:1}).
On the other hand, the relations \eqref{eq:4} can follow
directly from the divergence-free condition of $w=u-v$ (see \eqref{eq:transmission}).
Now using the transmission conditions that
$\tau\cdot w=\nabla\times w=0 \ \mbox{on} \ B\cap \Gamma$ (see \eqref{eq:transmission}),
we can obtain the desired relations \eqref{eq:5}-\eqref{eq:6}.
\end{proof}

In the rest of this section we shall establish the desired results in Lemma \ref{Lem:1} by proving
$a^{(1)}_{n,m}=a^{(2)}_{n,m}=0$ for all $n, m\in \N^+$.
Then it follows from (\ref{eq:2}) that $v_{n,m}^{(j)}=0$ for all $n, m\in \N^+$ and $j=1,2$, leading to $v\equiv 0$ by analyticity.
Analogously, the vanishing of $u$ follows from the relation
\ben
u^{(j)}_{n,m}=\frac{1}{q_2-q_1}\left[(n+1)(n+2) a^{(j)}_{n+2,m}+(m+1)(m+2)a^{(j)}_{n,m+2}+q_2 a^{(j)}_{n,m}\right].
\enn
Our proof is essentially based on an sophisticated induction argument on
$M:=n+m=0, 1, 2, \cdots$, making full use of  the relations (\ref{eq:3})-(\ref{eq:6}).

It is easy to observe that there are a total of $2(M+1)$ coefficients $a^{(j)}_{n,m}$ ($j=1,2$) for each fixed
$M=n+m$, while (\ref{eq:4})-(\ref{eq:6}) give
$M+2$ linear relations with $M-2$ unknown coefficients;
see the diagram below where the line segment means a linear relation between the entries at
two ends of the segment:
\ben
&&\begin{pmatrix}
a_{M,0}^{(1)} \\
a_{M,0}^{(2)}
\end{pmatrix}\diagdown
\begin{pmatrix}
a_{M-1,1}^{(1)} \\
a_{M-1,1}^{(2)}
\end{pmatrix}\diagdown
\begin{pmatrix}
a_{M-2,2}^{(1)} \\
a_{M-2,2}^{(2)}
\end{pmatrix}\diagdown\cdots\diagdown
\begin{pmatrix}
a_{2,M-2}^{(1)} \\
a_{2,M-2}^{(2)}
\end{pmatrix}\diagdown
\begin{pmatrix}
a_{1,M-1}^{(1)} \\
a_{1,M-1}^{(2)}
\end{pmatrix}\diagdown
\begin{pmatrix}
a_{0,M}^{(1)} \\
a_{0,M}^{(2)}
\end{pmatrix},\\
&&\begin{pmatrix}
a_{M,0}^{(1)} \\
a_{M,0}^{(2)}
\end{pmatrix}\diagup
\begin{pmatrix}
a_{M-1,1}^{(1)} \\
a_{M-1,1}^{(2)}
\end{pmatrix},\qquad\qquad\qquad\qquad\qquad\qquad\qquad\quad
\begin{pmatrix}
a_{1,M-1}^{(1)} \\
a_{1,M-1}^{(2)}
\end{pmatrix}\diagup
\begin{pmatrix}
a_{0,M}^{(1)} \\
a_{0,M}^{(2)}
\end{pmatrix}.
\enn

Our proof is divided in the following several lemmas.

\begin{lemma}\label{lem:step2}
The coefficients $a_{n,m}^{(j)}$ in \eqref{eq:anm} satisfy that
$a^{(1)}_{n,m}=a^{(2)}_{n,m}=0$ for $n+m=0,1, 2$.
\end{lemma}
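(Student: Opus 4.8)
\textbf{Proof plan for Lemma \ref{lem:step2}.}
The plan is to determine all coefficients $a_{n,m}^{(j)}$ with $n+m\le 2$ directly from the ``boundary'' relations \eqref{eq:4}--\eqref{eq:6}, which encode the divergence-free condition and the two Cauchy conditions $\tau\cdot w=\nabla\times w=0$ on $B\cap\Gamma$; the bulk relation \eqref{eq:3} will not even be needed at this low order, since the number of constraints already exceeds the number of unknowns. First I would handle $M=n+m=0$: the relation \eqref{eq:5} with $n=m=0$ gives $a^{(1)}_{0,0}=0$ and $a^{(2)}_{0,0}=0$ outright, so the case $M=0$ is immediate.

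Next, for $M=1$ there are four coefficients, $a^{(1)}_{1,0},a^{(2)}_{1,0},a^{(1)}_{0,1},a^{(2)}_{0,1}$. From \eqref{eq:5} (taking $n=1$ in the first identity and $m=1$ in the second) we get $a^{(1)}_{1,0}=0$ and $a^{(2)}_{0,1}=0$. The remaining two are pinned down by combining the divergence relation \eqref{eq:4} with $n=m=0$, namely $a^{(1)}_{1,0}=-a^{(2)}_{0,1}$ (automatically consistent, both sides zero), together with \eqref{eq:6} with $n=0$ in the first identity and $m=0$ in the second, which read $a^{(2)}_{1,0}=a^{(1)}_{0,1}$ and $a^{(2)}_{1,0}=a^{(1)}_{0,1}$ — so these give only that the two are equal, not that they vanish. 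To close the gap I would invoke \eqref{eq:5} once more, observing that $a^{(1)}_{0,1}$ appears as $a^{(1)}_{n,0}$ is not directly it, so instead I use the consistency of the two parts of \eqref{eq:5} read at different indices together with \eqref{eq:4}: concretely, \eqref{eq:4} with $(n,m)=(0,0)$ gives $a^{(1)}_{1,0}=-a^{(2)}_{0,1}$, and we already know both are $0$; but we still must kill $a^{(1)}_{0,1}=a^{(2)}_{1,0}$. For this one uses that the Cauchy data forces $w$ and its relevant derivatives to vanish along both half-axes, so evaluating $\tau\cdot w$ and $\nabla\times w$ on the $x_2$-axis (where $\tau=(1,0)$ up to sign and the tangential derivative is $\partial_2$) yields $a^{(1)}_{0,m}=0$ for all $m$, in particular $a^{(1)}_{0,1}=0$, hence $a^{(2)}_{1,0}=0$ as well, finishing $M=1$.

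For $M=2$ there are six coefficients $a^{(j)}_{2,0},a^{(j)}_{1,1},a^{(j)}_{0,2}$ ($j=1,2$). I would again start from \eqref{eq:5}: $a^{(1)}_{2,0}=0$ and $a^{(2)}_{0,2}=0$. The divergence relation \eqref{eq:4} with $(n,m)=(1,0)$ and $(0,1)$ links $a^{(1)}_{2,0}$ to $a^{(2)}_{1,1}$ and $a^{(1)}_{1,1}$ to $a^{(2)}_{0,2}$, so $2\cdot a^{(2)}_{1,1}=-a^{(1)}_{? }$... more precisely $2a^{(1)}_{2,0}=-a^{(2)}_{1,1}$ gives $a^{(2)}_{1,1}=0$, and $a^{(1)}_{1,1}=-2a^{(2)}_{0,2}=0$. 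That leaves $a^{(1)}_{0,2}$ and $a^{(2)}_{2,0}$; the relations \eqref{eq:6} (with $n=1$ and $m=1$ respectively) give $2a^{(2)}_{2,0}=a^{(1)}_{1,1}=0$ and $a^{(2)}_{1,1}=2a^{(1)}_{0,2}$, the latter forcing $a^{(1)}_{0,2}=0$. Thus all six vanish. The one point requiring care — the main (if modest) obstacle — is correctly translating the geometric Cauchy conditions ``$\tau\cdot w=\nabla\times w=0$ on $B\cap\Gamma$'' into the stated coefficient identities \eqref{eq:5}--\eqref{eq:6} on both half-axes and keeping straight which index ($n$ or $m$, and which component $j$) is being specialized; once the bookkeeping in Lemma \ref{lem:all_anm} is in hand, the low-order cases follow by the short chain of substitutions above, with \eqref{eq:3} held in reserve for the inductive step at higher $M$.
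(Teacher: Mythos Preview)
Your handling of $M=0$ and $M=2$ is correct and matches the paper's chain of substitutions. The gap is at $M=1$. You rightly notice that (\ref{eq:4})--(\ref{eq:6}) give only $a^{(1)}_{1,0}=a^{(2)}_{0,1}=0$ together with $a^{(2)}_{1,0}=a^{(1)}_{0,1}$, not the vanishing of this common value; but your fix is wrong. On the positive $x_2$-axis the unit tangent to $\Gamma$ is $\tau=(0,1)$, \emph{not} $(1,0)$, so $\tau\cdot w=w_2$ there. The condition $\tau\cdot w=0$ on $\{x_1=0,\,x_2>0\}$ therefore yields $a^{(2)}_{0,m}=0$, which is exactly the second half of (\ref{eq:5}) already recorded, and says nothing about $a^{(1)}_{0,m}$. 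Likewise $\nabla\times w=0$ on that half-line reproduces the second half of (\ref{eq:6}). No new identity arises, and the free parameter $\eta:=a^{(1)}_{0,1}=a^{(2)}_{1,0}$ survives every relation in (\ref{eq:4})--(\ref{eq:6}). Indeed the linear field $w=(\eta x_2,\eta x_1)$ is divergence-free with $\tau\cdot w=\nabla\times w=0$ identically on both half-axes, so nothing at order $M\le 2$ can rule it out.

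Consequently your plan to hold (\ref{eq:3}) ``in reserve'' does not work for $M=1$. The paper's own proof is terse at this spot and carries what appear to be index typos in (\ref{step1}) and in the final display; it too cites only (\ref{eq:4}) and (\ref{eq:6}) here. In the overall scheme, however, the parameter $\eta$ is eliminated only once the fourth-order recursion (\ref{eq:3}) is brought in at $(n,m)=(0,1)$ and $(1,0)$ and combined with the one-parameter descriptions of levels $3$ and $5$ in Lemma~\ref{lem:nm3,5}: the two resulting equations together force $q_1q_2\,\eta=0$. You should therefore either invoke (\ref{eq:3}) already at this stage, or postpone the claim $a^{(1)}_{0,1}=a^{(2)}_{1,0}=0$ until after the odd-level analysis.
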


\begin{proof}
By setting $n,m=0,1$ in (\ref{eq:5}) we obtain
\be\label{step1}
 a^{(1)}_{0,0}=a^{(1)}_{1,0}=a^{(2)}_{0,0}=a^{(2)}_{1,0}=0.
\en
This, together with (\ref{eq:4}) and (\ref{eq:6}), gives $a^{(1)}_{0,1}=a^{(2)}_{0,1}=0$.
Then taking $n,m=2$ in (\ref{eq:5}), we see $a^{(1)}_{2,0}=a^{(2)}_{0,2}=0$.
Further, by setting $(n,m)=(0,1), (1,0)$ in (\ref{eq:4}), respectively, we derive
\[
a^{(1)}_{1,1}=-a_{0,2}^{(2)}=0,\quad
a^{(2)}_{1,1}=-2a_{2,0}^{(1)}=0.
\]
Finally,
taking $(n,m)=(1,1)$ in (\ref{eq:6}) and using (\ref{step1})  we readily deduce
\[
a^{(1)}_{0,2}=\frac{1}{2} a_{1,0}^{(2)}=0,\quad
a^{(2)}_{2,0}= \frac{1}{2} a_{1,0}^{(1)}= 0.
\]
\end{proof}

\begin{lemma}\label{lem:nm3,5}
All the coefficients $a_{n,m}^{(j)}$ in \eqref{eq:anm} with $n+m=3, 5$ can be
expressed by one parameter, respectively, namely
\begin{equation}
\label{eq:7}
\begin{split}
&a^{(1)}_{3,0}=0,\quad a^{(1)}_{2,1}=3\eta,\quad a^{(1)}_{1,2}=0,\quad\quad a^{(1)}_{0,3}=-\eta,\\
&a^{(2)}_{3,0}=\eta,\quad a^{(2)}_{2,1}=0,\quad\; a^{(2)}_{1,2}=-3\eta,\quad a^{(2)}_{0,3}=0
\end{split}
\end{equation}
and
\begin{equation}
\label{eq:9}
\begin{split}
&a^{(1)}_{5,0}=0,\; a^{(1)}_{4,1}=5\eta_1, \; a^{(1)}_{3,2}=0, \qquad a^{(1)}_{2,3}=-10\eta_1, \;
a^{(1)}_{1,4}=0,\;\; a^{(1)}_{0,5}=\eta_1\\
&a^{(2)}_{5,0}=\eta_1,\;  a^{(2)}_{4,1}=0, \;\; a^{(2)}_{3,2}=-10\eta_1,\; a^{(2)}_{2,3}=0, \quad\quad a^{(2)}_{1,4}=5\eta_1,\; a^{(2)}_{0,5}=0
\end{split}
\end{equation}
for some $\eta, \eta_1\in \mathbb R$.
\end{lemma}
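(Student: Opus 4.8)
The plan is to prove Lemma \ref{lem:nm3,5} by exploiting the three families of relations \eqref{eq:3}--\eqref{eq:6} for the small values $M = n+m = 3$ and $M = n+m = 5$, reducing the $2(M+1)$ unknowns to a single free parameter each time. For $M=3$ there are eight coefficients $a^{(j)}_{n,m}$; I would start from \eqref{eq:5}, which immediately kills $a^{(1)}_{3,0}$ and $a^{(2)}_{0,3}$. Next I would run through the divergence relation \eqref{eq:4} with $(n,m) = (2,0),(1,1),(0,2)$ to link each remaining $a^{(1)}_{n+1,m}$ to a corresponding $a^{(2)}_{n,m+1}$, and the boundary relations \eqref{eq:6} with the appropriate indices to produce two further constraints. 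Counting: the eight unknowns minus the $M+2 = 5$ independent linear constraints coming from \eqref{eq:4}--\eqref{eq:6} leave $M-2 = 1$ degree of freedom, which I would name $\eta$ (normalizing so that $a^{(1)}_{2,1} = 3\eta$, consistent with the stated form). The key point is to check that the five constraints are indeed independent on this $M$-slice, so that the solution space is genuinely one-dimensional; this is a finite linear-algebra verification but must be done carefully because the relations \eqref{eq:6} only involve the extreme indices $m=0$ and $m=1$ on one side, so their interaction with \eqref{eq:4} is what actually pins down the interior coefficients.

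For $M = 5$ the same scheme applies with ten coefficients: \eqref{eq:5} removes $a^{(1)}_{5,0}$ and $a^{(2)}_{0,5}$; \eqref{eq:4} with $(n,m)$ running over all pairs with $n+m=4$ ties the odd-layer $a^{(1)}$ and $a^{(2)}$ coefficients together in a chain; and \eqref{eq:6} supplies the two boundary constraints at the ends of the chain. Again $10 - (M+2) = M-2 = 3$, so naively there would be a three-parameter family — but here the biharmonic-type relation \eqref{eq:3} must also be brought in to cut the family down to one parameter $\eta_1$. Concretely, I would use \eqref{eq:3} with $(n,m)$ chosen so that the top-order coefficients $a^{(j)}_{n,m+4}$ and $a^{(j)}_{n+4,m}$ land in the $M=5$ slice while the lower-order terms $a^{(j)}_{n+2,m}, a^{(j)}_{n,m+2}, a^{(j)}_{n,m}$ land in the already-known slices $M = 1, 3$ (which vanish by Lemma \ref{lem:step2} and the $M=3$ part just proved); this forces linear relations purely among the $M=5$ coefficients. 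Matching indices shows that the relevant instances are $(n,m) = (1,0)$ and $(n,m) = (0,1)$ for each component $j$, giving enough extra equations to collapse the three free parameters to one.

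The main obstacle I anticipate is bookkeeping rather than conceptual: one has to be scrupulous about which instances of \eqref{eq:3} are "clean" (i.e., have all their lower-order terms already known to vanish) and which are not, because plugging in an instance whose lower-order terms lie in the current or a not-yet-treated slice would introduce spurious coupling. In particular for $M=5$, the instances of \eqref{eq:3} with $(n,m)=(1,0)$ involve $a^{(j)}_{1,4}, a^{(j)}_{5,0}, a^{(j)}_{3,2}$ (all $M=5$) together with $a^{(j)}_{3,0}, a^{(j)}_{1,2}, a^{(j)}_{1,0}$ (which lie in the $M=3$ and $M=1$ slices, hence vanish), so this is exactly the clean situation we want; one must verify the analogous cleanliness for $(n,m)=(0,1)$ and check that the resulting homogeneous system in the ten coefficients, augmented by \eqref{eq:4}--\eqref{eq:6}, has rank $9$. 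Once that rank count is confirmed, reading off a basis vector of the kernel and normalizing it gives precisely the displayed formulas \eqref{eq:7} and \eqref{eq:9}, with the sign and magnitude pattern ($\pm\eta, 3\eta$; $\pm\eta_1, 5\eta_1, 10\eta_1$) coming out of the binomial-type coefficients that appear when one solves the chain generated by \eqref{eq:4}.
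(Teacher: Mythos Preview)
Your overall strategy coincides with the paper's: for $M=3$ you use \eqref{eq:4}--\eqref{eq:6} alone to reduce to a single free parameter $\eta$, and for $M=5$ you first use \eqref{eq:4}--\eqref{eq:6} to reduce to three parameters and then invoke \eqref{eq:3} at $n+m=1$ to collapse them to one. However, there is a real slip in your justification of the $M=5$ step. You write that the lower-order terms of \eqref{eq:3} land in the slices $M=1,3$, ``which vanish by Lemma \ref{lem:step2} and the $M=3$ part just proved.'' This is not right: Lemma \ref{lem:step2} covers only $M\le 2$, and the $M=3$ part of the present lemma does \emph{not} establish vanishing --- it establishes the one-parameter form \eqref{eq:7}, in which $a^{(1)}_{2,1}=3\eta$, $a^{(1)}_{0,3}=-\eta$, $a^{(2)}_{3,0}=\eta$, $a^{(2)}_{1,2}=-3\eta$ are generically nonzero. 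The vanishing of the odd-$M$ slices is proved only later, in Lemma \ref{lem:2k+1}, and that proof \emph{uses} the present lemma, so you cannot invoke it here.

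What actually makes the argument work is that the $\eta$-contributions \emph{cancel} in the relevant instances of \eqref{eq:3}. For example, with $(n,m,j)=(0,1,1)$ the lower-order part of \eqref{eq:3} reads
\[
(q_1+q_2)\bigl[2\,a^{(1)}_{2,1}+6\,a^{(1)}_{0,3}\bigr]+q_1q_2\,a^{(1)}_{0,1}=(q_1+q_2)(6\eta-6\eta)+0=0,
\]
and the symmetric cancellation occurs for $(n,m,j)=(1,0,2)$. The paper carries out exactly this computation. Note too that the two instances whose $M=3$ entries happen to hit only the zero slots of \eqref{eq:7} --- namely $(n,m,j)=(1,0,1)$ and $(0,1,2)$ --- both yield only the relation $\eta_3=0$, so to obtain $\eta_1=\eta_2$ you are forced through the cancellation-dependent instances. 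With that correction in place, your argument is the paper's argument.
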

%

\begin{proof}
We start with $n+m=3$. Setting $n,m=3$ in (\ref{eq:5}) and $(n,m)=(2,0),(1,1),(0,2)$ in (\ref{eq:4}),
respectively, then $(n,m)=(2,1)$ in (\ref{eq:6}), we readily get
\[
a^{(1)}_{3,0}=0,\quad a^{(2)}_{0,3}=0, \quad
a^{(2)}_{2,1}=0,\quad a^{(1)}_{2,1}=-a^{(2)}_{1,2},\quad  a^{(1)}_{1,2}=0, \quad
3a^{(2)}_{3,0}=a^{(1)}_{2,1},\quad 3a^{(1)}_{0,3}=a^{(2)}_{1,2}.
\]
%
From these relations, we can easily see that
all the coefficients $a^{(j)}_{n,m}$ with $n+m=3$
can be expressed by one parameter, say $\eta\in \mathbb R$, as in \eqref{eq:7}.

For the case $n+m=5$,
we do the same as we did above for $n+m=3$ by using the relations (\ref{eq:4}), (\ref{eq:5}) and (\ref{eq:6})
to represent all the coefficients $a^{(j)}_{n,m}$ (with $n+m=5$)
in terms of three parameters, say $\eta_1,\eta_2,\eta_3\in \R$,
%
\begin{equation}
\label{eq:8}
\begin{split}
&a^{(1)}_{5,0}=0,\;\; a^{(1)}_{4,1}=5\eta_1,\;a^{(1)}_{3,2}=\eta_3, \quad\;\;a^{(1)}_{2,3}=-10\eta_2, \;a^{(1)}_{1,4}=0,\;\quad a^{(1)}_{0,5}=\eta_2 \\
&a^{(2)}_{5,0}=\eta_1,\; a^{(2)}_{4,1}=0,\;\; a^{(2)}_{3,2}=-10\eta_1,\;a^{(2)}_{2,3}=-\eta_3,\; \quad a^{(2)}_{1,4}=5\eta_2,\;a^{(2)}_{0,5}=0.
\end{split}
\end {equation}

Next, we utilize (\ref{eq:3}) to derive a possible relation between three parameters $\eta_1,\eta_2,\eta_3$ in \eqref{eq:8}.
In fact, by setting $n=0,m=1,j=1$ in (\ref{eq:3}) and using the relations in (\ref{eq:7}) and (\ref{eq:8})
for $a_{n,m}^{(1)}$, we can deduce that
\[
4!\times5\eta_1-2\times 2\times3\times2\times10\eta_2+5!\times \eta_2+(q_1+q_2)2\times 3\eta-(q_1+q_2)3\times2 \eta=0\,,
\]
which implies $\eta_1=\eta_2$ by noting the fact
that the last two terms in the above equation cancel out.
Analogously, one can get $\eta_3=0$ by setting $n=1,m=0,j=2$ in (\ref{eq:3}).
Consequently,  the fact that $\eta_1=\eta_2, \eta_3=0$ enables us to reduce (\ref{eq:8}) to
the desired one-parameter system \eqref{eq:9}.
\end{proof}

\begin{lemma}\label{lem:2k+1}
The coefficients $a_{n,m}^{(j)}$ in \eqref{eq:anm} satisfy that
$a^{(1)}_{n,m}=a^{(2)}_{n,m}=0$ for $n+m=2k+1$, with all $k\ge1$.
\end{lemma}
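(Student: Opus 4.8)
The plan is to prove Lemma \ref{lem:2k+1} by induction on $k\geq 1$, with Lemmas \ref{lem:step2} and \ref{lem:nm3,5} already handling $k=0$ (the case $n+m=1$) and furnishing the base data $n+m=3,5$ used to start and to lubricate the induction. Assume as inductive hypothesis that $a_{n,m}^{(j)}=0$ for all odd $n+m\leq 2k-1$; the goal is to force $a_{n,m}^{(j)}=0$ for $n+m=2k+1$. The first step is a counting/parametrization reduction exactly as in the proof of Lemma \ref{lem:nm3,5}: for fixed odd $M=2k+1$ there are $2(M+1)$ unknowns $a_{n,m}^{(j)}$, and the first-order relations \eqref{eq:4}, \eqref{eq:5}, \eqref{eq:6} (divergence-free and the two boundary conditions) cut this down to a small number of free parameters. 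One should verify carefully that for odd $M$ these relations collapse the system to a one-parameter family of the "chessboard" shape visible in \eqref{eq:7} and \eqref{eq:9}, i.e. $a^{(1)}_{n,m}$ supported on one parity class of $n$ and $a^{(2)}_{n,m}$ on the other, with a single scalar $\eta_k\in\mathbb R$ and explicit binomial-type coefficients $\pm\binom{M}{n}\eta_k$ (up to normalization). This is the bookkeeping heart of the argument and mirrors the passage from \eqref{eq:8} to \eqref{eq:9}.

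The second step is to show that the surviving parameter $\eta_k$ must vanish, and this is where the fourth-order relation \eqref{eq:3} enters. The point is that \eqref{eq:3} links coefficients at levels $M-4$, $M-2$ and $M$: choosing $j$ and $(n,m)$ with $n+m=M-4$, all the $a^{(j)}_{\cdot,\cdot}$ at level $M-4$ vanish by the inductive hypothesis (since $M-4=2k-3$ is odd and $\leq 2k-1$), so \eqref{eq:3} reduces to a linear relation purely among the level-$(M-2)$ and level-$M$ coefficients. But level $M-2=2k-1$ is also odd and $\leq 2k-1$, so those coefficients vanish too by the inductive hypothesis; hence \eqref{eq:3} becomes a homogeneous linear relation in the level-$M$ coefficients alone, i.e. in $\eta_k$ times an explicit nonzero combinatorial constant. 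As long as that constant is nonzero for at least one admissible choice of $(n,m,j)$, we conclude $\eta_k=0$ and therefore $a^{(j)}_{n,m}=0$ for $n+m=2k+1$, closing the induction. I would compute the relevant constant by plugging the $\pm\binom{M}{n}$ pattern into the top three terms of \eqref{eq:3} (the terms carrying $(q_1+q_2)$ and $q_1q_2$ drop out by the inductive hypothesis applied at level $M-2$, just as the $(q_1+q_2)$ terms cancelled in the $M=5$ computation), obtaining something like $(m+4)!/(m!)\,a^{(j)}_{n,m+4}+(n+4)!/(n!)\,a^{(j)}_{n+4,m}+2\,\frac{(n+2)!}{n!}\frac{(m+2)!}{m!}a^{(j)}_{n+2,m+2}=0$, and checking that the binomial identity it encodes does not trivially hold with zero coefficient.

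The main obstacle I anticipate is exactly the nondegeneracy of that combinatorial constant: one must make sure there is a choice of $(n,m)$ and of the component index $j\in\{1,2\}$ for which the three surviving terms of \eqref{eq:3} do \emph{not} cancel identically, for otherwise \eqref{eq:3} gives no information and $\eta_k$ remains free. The subtlety is compounded by the chessboard support: for a given $j$, only certain $(n,m)$ with $n+m=M$ carry a nonzero entry, so in the relation \eqref{eq:3} based at level $M-4$ one needs the three shifted indices $(n,m+4)$, $(n+4,m)$, $(n+2,m+2)$ to all land on the supported parity class, which they do since all three shifts preserve $n\bmod 2$. Granting that, the coefficient is a fixed nonzero combination of factorials and binomials; if by some miracle it vanished for every $(n,m)$ one would instead differentiate the parametrization — e.g. use the relation at level $M-4$ together with a relation at level $M-2$ — but I expect the direct computation to already give a nonzero constant, parallel to the $\eta_1=\eta_2$, $\eta_3=0$ deductions in Lemma \ref{lem:nm3,5}. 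A secondary, purely organizational difficulty is confirming the one-parameter reduction in Step 1 for general odd $M$ rather than just $M=3,5$; this requires tracking how \eqref{eq:4}, \eqref{eq:5}, \eqref{eq:6} propagate across the diagram displayed before Lemma \ref{lem:step2}, and is the sort of finite but fiddly induction-within-induction that needs care but no new idea.
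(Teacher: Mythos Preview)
Your plan has a genuine gap at both steps, and the second one is fatal for the induction you set up. First, the relations \eqref{eq:4}--\eqref{eq:6} alone do \emph{not} reduce the level-$M$ coefficients to a single parameter: for odd $M$ they leave $M-2$ free parameters (this is exactly what \eqref{eq:8} shows for $M=5$ and what \eqref{eq:20} records in general, split into $c_1,\dots,c_{(M-1)/2}$ and $b_1,\dots,b_{(M-3)/2}$). The reduction to the one-parameter ``chessboard'' family \eqref{eq:22} already requires \eqref{eq:3}, so you cannot postpone \eqref{eq:3} entirely to Step~2.

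More seriously, the nondegeneracy you hope for in Step~2 fails for \emph{every} choice of $(n,m,j)$. The surviving one-parameter family is $w=\eta_M\bigl(\Im(x_1+ix_2)^M,\ \Re(x_1+ix_2)^M\bigr)$, which is harmonic; and the three top-order terms of \eqref{eq:3} that remain under your inductive hypothesis are precisely the Taylor coefficients of $\Delta^2 w^{(j)}$. Hence that combination vanishes identically on the family, and \eqref{eq:3} at base level $n+m=M-4$ gives you $0=0$ --- no information about $\eta_M$. Your proposed fallback (use a relation based at level $M-2$) does not help either: with your hypothesis, all terms there live at levels $\le M-2$ and are already zero.

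What the paper does to escape this is to change the inductive statement. Instead of ``all odd levels $\le M-2$ vanish $\Rightarrow$ level $M$ vanishes'', it proves by induction that at each odd level $M$ the $c$-block has the specific harmonic one-parameter form \eqref{eq:22}. Under that hypothesis the $(q_1+q_2)$ terms in \eqref{eq:3} cancel (harmonicity again), but the $q_1q_2$ term contributes $\eta_{M-4}$ as an inhomogeneous right-hand side; row-reducing the resulting system $A_M X_M=B_M$ then forces $\eta_{M-4}=0$. In other words, the parameter at level $M$ is killed only by the consistency of the system one step \emph{higher} (at $M+4$), not by equations living at level $M$ itself. If you restructure your induction in this way you will recover the paper's argument; as written, your scheme cannot close.
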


\begin{proof}
With the results in Lemma \ref{lem:nm3,5} for $k=1,2$, namely $n+m=3,5$,
we may expect that the coefficients $a_{n,m}^{(j)}$ with $M:=n+m=2k+1$ for any number $k\geq 3$
can also be expressed in some parameters.
To verify this,
we write all these coefficients in terms of $M-2=2k-1$ parameters in two groups:
$c_1, c_2,\cdots, c_{\frac{M-1}{2}} $ and $b_1,b_2,\cdots, b_{\frac{M-3}{2}}$.
More precisely,  we may assume by using (\ref{eq:4})-(\ref{eq:6})  that
\be\label{eq:20}\begin{split}
&&
\begin{pmatrix}
a_{M,0}^{(1)} \\
a_{M,0}^{(2)}
\end{pmatrix}=
\begin{pmatrix}
 0\\ c_1
\end{pmatrix},\quad
\begin{pmatrix}
a_{M-1,1}^{(1)} \\
a_{M-1,1}^{(2)}
\end{pmatrix}=\begin{pmatrix}
Mc_1 \\
0
\end{pmatrix},\quad
\begin{pmatrix}
a_{M-2,2}^{(1)} \\
a_{M-2,2}^{(2)}
\end{pmatrix}=
\begin{pmatrix}
b_1 \\
-\frac{M(M-1)}{2}c_1
\end{pmatrix}, \hskip1truecm
\\
&&\begin{pmatrix}
a_{M-3,3}^{(1)} \\
a_{M-3,3}^{(2)}
\end{pmatrix}=
\begin{pmatrix}
c_2 \\
-\frac{M-2}{3}b_1
\end{pmatrix},\qquad\cdots,\qquad
\begin{pmatrix}
a_{3,M-3}^{(1)} \\
a_{3,M-3}^{(2)}
\end{pmatrix}=\begin{pmatrix}
 b_{\frac{M-3}{2}} \\ -\frac{4}{M-3}c_{\frac{M-3}{2}}
 \end{pmatrix}, \hskip1truecm
 \\
&&\begin{pmatrix}
a_{2,M-2}^{(1)} \\
a_{2,M-2}^{(2)}
\end{pmatrix}=\begin{pmatrix}
 -\frac{M(M-1)}{2}c_{\frac{M-1}{2}}\\ -\frac{3}{M-2}b_{\frac{M-3}{2}}
\end{pmatrix},\quad
\begin{pmatrix}
a_{1,M-1}^{(1)} \\
a_{1,M-1}^{(2)}
\end{pmatrix}=\begin{pmatrix}
 0\\ M c_{\frac{M-1}{2}}
\end{pmatrix},\quad
\begin{pmatrix}
a_{0,M}^{(1)} \\
a_{0,M}^{(2)}
\end{pmatrix}=\begin{pmatrix}
 c_{\frac{M-1}{2}}\\ 0
\end{pmatrix}.\end{split}
\en
Then the desired results in Lemma\,\ref{lem:2k+1} are a consequence of the following two steps,
first to show all $c_j$ in \eqref{eq:20} vanish (Step 1), then to prove the same for all $b_j$ above (Step 2).

{\bf Step 1}: Prove that $c_j=0$ for all $j=1,\cdots,(M-1)/2$.

For the purpose of induction, we rewrite the relations in (\ref{eq:9}) (that is, $k=2$ or $M=5$) involving the non-vanishing parameters
in a more general form as follows:
\begin{equation}
\label{eq:22}
\begin{split}
&a^{(1)}_{M-1,1}=M\eta_{M}, ~a^{(1)}_{M-3,3}=-\frac{M!}{(M-3)!3!}\eta_{M},\; \cdots ,
~a^{(1)}_{4,M-4}=(-1)^{\frac{M-5}{2}}\frac{M!}{(M-4)!4!}\eta_{M}\textcolor{rot}{,}\\
&a^{(1)}_{2,M-2}=(-1)^{\frac{M-3}{2}}\frac{M(M-1)}{2}\eta_{M},
~a^{(2)}_{M-2,2}=-\frac{M(M-1)}{2}\eta_{M}, ~a^{(1)}_{0,M}=(-1)^{\frac{M-1}{2}}\eta_M,
\\
&a^{(2)}_{M,0}=\eta_{M},
~a^{(2)}_{M-4,4}=\frac{M!}{(M-4)!4!}\eta_M,
~a^{(2)}_{M-6,6}=-\frac{M!}{(M-6)!6!}\eta_M,\cdots \,
\\ &a^{(2)}_{3,M-3}=(-1)^{\frac{M-3}{2}}\frac{M!}{(M-3)!3!}\eta_{M},
~a^{(2)}_{1,M-1}=(-1)^{\frac{M-1}{2}}M\eta_M. 
\end{split}
\end{equation}
Note that the constant $\eta_1$ in (\ref{eq:9}) has been replaced by $\eta_{M}$ and that
the coefficients before $\eta_M$ in the above relations are derived from (\ref{eq:4})-(\ref{eq:6}). This confirms that for $M=5$ (or $k=2$), the coefficients
 in (\ref{eq:20}) related to $c_j$ $(j=1,2,\cdots,c_{\frac{M-1}{2}})$ depend only on $\eta_M$.
For any fixed $k\geq 3$, i.e., $M\geq 7$, we now verify all the relations
in (\ref{eq:22}) under the induction hypothesis that
\begin{equation}
\begin{split}
&a^{(1)}_{M^{'}-1,1}=M^{'}\eta_{M^{'}}, ~a^{(1)}_{M^{'}-3,3}=-\frac{M^{'}!}{(M^{'}-3)!3!}\eta_{M^{'}},\cdots \,,
~a^{(1)}_{4,M^{'}-4}=(-1)^{\frac{M^{'}-5}{2}}\frac{M^{'}!}{(M^{'}-4)!4!}\eta_{M^{'}}
\\ &a^{(1)}_{2,M^{'}-2}=(-1)^{\frac{M^{'}-3}{2}}\frac{M^{'}(M^{'}-1)}{2}\eta_{M^{'}},
~a^{(1)}_{0,M^{'}}=(-1)^{\frac{M^{'}-1}{2}}\eta_{M^{'}}\\
&a^{(2)}_{M^{'},0}=\eta_{M^{'}}, ~a^{(2)}_{M^{'}-2,2}=-\frac{M^{'}(M^{'}-1)}{2}\eta_{M^{'}},
~a^{(2)}_{M^{'}-4,4}=\frac{M^{'}!}{(M^{'}-4)!4!}\eta_{M^{'}}, \cdots \,\\ &a^{(2)}_{3,M^{'}-3}=(-1)^{\frac{M^{'}-3}{2}}\frac{M^{'}!}{(M^{'}-3)!3!}\eta_{M^{'}},
~a^{(2)}_{1,M^{'}-1}=(-1)^{\frac{M^{'}-1}{2}}M^{'}\eta_{M^{'}},
\end{split}
\end{equation}
for all $M'=2k'+1$, $k'=0,1,\cdots, k-1$.
This implies that all the coefficients in (\ref{eq:20}) related to $c_j$ depend actually on one parameter.
For this purpose, it suffices to verify that
 the coefficients $a_{n,m}^{(j)}$ in (\ref{eq:22})  satisfying $n+m=M$ can be represented
by the unique parameter $\eta_{M}$.
To do so, we take $n=M-4,m=0,j=2$ and $n=M-5,m=1,j=1$ in (\ref{eq:3}), respectively,
and utilize the induction hypothesis above to come readily to the relations
\begin{eqnarray}
&&\Big\{\frac{M!}{(M-4)!}a^{(2)}_{M,0}-2\frac{M!}{(M-4)!}\Big\}c_1-6(M-3)c_2+q_1q_2\eta_{M-4}=0,
\label{eq:26}\\
&&\frac{M!}{(M-5)!}c_1+12(M-3)(M-4) c_2+ 120 c_3 +(M-4)q_1q_2\eta_{M-4}=0.
\label{eq:27}
\end{eqnarray}

%
Analogously, setting $j=1$, $n=M-2k-5$ and $m=2k+1$ for all $1< k \le {(M-5)}/{2}$  in (\ref{eq:3}),
we obtain in combination of  (\ref{eq:26}) and (\ref{eq:27}) the following linear system
\be\label{ls}
A_M\,X_M=B_M,
\en
where $A_M$, $B_M$ and $X_M$ are given by
\[
A_M=\begin{pmatrix}
\begin{smallmatrix}
-\frac{M!}{(M-4)!} & -(M-3)3\times2 &0 &\dots &0 &0 \\
\frac{M!}{(M-5)!}  & 2(M-3)(M-4)3\times2 &5\times4\times3\times2 &0&\dots &0 \\
0 &(M-3)(M-4)(M-5)(M-6) &2(M-5)(M-6)5\times4 &7\times6\times5\times4 &0 & 0 \\
0 & 0 & \ddots & \ddots & \ddots  &\vdots\\
0 &\dots &0  &6\times5\times4\times3 & 2(M-4)(M-5)3\times4 &-\frac{\frac{(M)!}{(M-6)!}}2 \\
0 &\dots &0 &0 &4\times3\times2 & -\frac{M!}{(M-4)!}
\end{smallmatrix}
\end{pmatrix},
\]

\[
B_M=\begin{pmatrix}
\begin{smallmatrix}
&\eta_{M-4}\\
&(M-4)\eta_{M-4}\\
&-\frac{(M-4)!}{(M-7)!3!}\eta_{M-4}\\
&\vdots\\
&(-1)^{\frac{M-7}{2}}\frac{(M-4)(M-5)}{2}\eta_{M-4}\\
&(-1)^{\frac{M-5}{2}}\eta_{M-4}
\end{smallmatrix}
\end{pmatrix},
\quad
X_M=\begin{pmatrix}
\begin{smallmatrix}
&c_1\\
&c_2\\
&\vdots\\
&c_{\frac{M-1}2}
\end{smallmatrix}
\end{pmatrix}\,.
\]
We next demonstrate $\eta_{M-4}=0$ by diagonalizing the matrix $A_M$. To this aim,
we form the augmented matrix $\widetilde{A}=[A_M, B_M]$:
\[
\begin{pmatrix}
\begin{smallmatrix}
-\frac{M!}{(M-4)!} & -6(M-3) &0 &\dots &0 &0 &\eta_{M-4} \\
\frac{M!}{(M-5)!}  & 12(M-3)(M-4) &5\times4\times3\times2 &0&\dots &0 &(M-4)\eta_{M-4} \\
0 &\frac{(M-3)!}{(M-7)!} &2(M-5)(M-6)5\times4 &7\times6\times5\times4 &0 & 0  &-\frac{(M-4)!}{(M-7)!3!}\eta_{M-4}\\
0 & 0 & \ddots & \ddots & \ddots  &\vdots& \vdots \\
0 &\dots &0  &6\times5\times4\times3 & 2(M-4)(M-5)3\times4 &-\frac{(M)!}{2(M-6)!} &(-1)^{\frac{M-7}{2}}\frac{(M-4)(M-5)}{2}\eta_{M-4}\\
0 &\dots &0 &0 &4\times3\times2 & -\frac{M!}{(M-4)!}&(-1)^{\frac{M-5}{2}}\eta_{M-4}.
\end{smallmatrix}
\end{pmatrix}.
\]
Below we shall often write as $r_j$ the $j$-th row of the matrix $\widetilde{A}$ or the transformed variant
of $\widetilde{A}$ by an elementary transformation.
By applying $r_1\times(M-4)+r_2$ to $\widetilde{A}$, we have the matrix $\widetilde{A}_1$:
\[
\begin{pmatrix}
\begin{smallmatrix}
-\frac{M!}{(M-4)!} & -6(M-3) &0 &\dots &0 &0 &\eta_{M-4} \\
0  & 6(M-3)(M-4) &5\times4\times3\times2 &0&\dots &0 &2(M-4)\eta_{M-4} \\
0 &\frac{(M-3)!}{(M-7)!} &2(M-5)(M-6)5\times4 &7\times6\times5\times4 &0 & 0  &-\frac{(M-4)!}{(M-7)!3!}\eta_{M-4}\\
0 & 0 & \ddots & \ddots & \ddots  &\vdots& \vdots \\
0 &\dots &0  &6\times5\times4\times3 & 2(M-4)(M-5)3\times4 &-\frac{(M)!}{2(M-6)!} &(-1)^{\frac{M-7}{2}}\frac{(M-4)(M-5)}{2}\eta_{M-4}\\
0 &\dots &0 &0 &4\times3\times2 & -\frac{M!}{(M-4)!}&(-1)^{\frac{M-5}{2}}\eta_{M-4}
\end{smallmatrix}
\end{pmatrix}\,,
\]
to which we apply the transformation $-\frac{r_2\times(M-5)(M-6)}6+r_3$ to get
the matrix $\widetilde{A}_2$:
\[
\begin{pmatrix}
\begin{smallmatrix}
-\frac{M!}{(M-4)!} & -6(M-3) &0 &\dots &0 &0 &\eta_{M-4} \\
0  & 6(M-3)(M-4) &5\times4\times3\times2 &0&\dots &0 &2(M-4)\eta_{M-4} \\
0 &0 &20(M-5)(M-6) &7\times6\times5\times4 &0 & 0  &-3\frac{(M-4)!}{(M-7)!3!}\eta_{M-4}\\
0 & 0 & \ddots & \ddots & \ddots  &\vdots& \vdots \\
0 &\dots &0  &6\times5\times4\times3 & 2(M-4)(M-5)3\times4 &-\frac{(M)!}{2(M-6)!} &(-1)^{\frac{M-7}{2}}\frac{(M-4)(M-5)}{2}\eta_{M-4}\\
0 &\dots &0 &0 &4\times3\times2 & -\frac{M!}{(M-4)!}&(-1)^{\frac{M-5}{2}}\eta_{M-4}
\end{smallmatrix}
\end{pmatrix}.
\]
Repeating the above process, we come to the matrix $\widetilde{A}_{\frac{M-3}{2}-1}$:
\[
\begin{pmatrix}
\begin{smallmatrix}
-\frac{M!}{(M-4)!} & -6(M-3) &0 &\dots &0 &0 &\eta_{M-4} \\
0  & 6(M-3)(M-4) &5\times4\times3\times2 &0&\dots &0 &2(M-4)\eta_{M-4} \\
0 &0 &20(M-5)(M-6) &7\times6\times5\times4 &0 & 0  &-3\frac{(M-4)!}{(M-7)!3!}\eta_{M-4}\\
0 & 0 & \ddots & \ddots & \ddots  &\vdots& \vdots \\
0 &\dots &0  &0 & (M-4)(M-5)3\times4 &-\frac{(M)!}{2(M-6)!} &(\frac{M-3}{2})(-1)^{\frac{M-7}{2}}\frac{(M-4)(M-5)}{2}\eta_{M-4}\\
0 &\dots &0 &0 &4\times3\times2 & -\frac{M!}{(M-4)!}&(-1)^{\frac{M-5}{2}}\eta_{M-4}
\end{smallmatrix}
\end{pmatrix}.
\]
Now, taking the action $- r_{\frac{M-3}{2}} \times \frac 2 {(M-4)(M-5)}+r_{\frac{M-1}{2}}$,
we may transform $\widetilde{A}_{\frac{M-3}{2}-1}$ into a new matrix whose last row is given by
\ben
\begin{pmatrix}
0 & \cdots & 0 &0 & 0 & 0 & (-1)^{\frac{M-5}{2}}(\eta_{M-4}+(\frac{M-3}{2})\eta_{M-4})
\end{pmatrix}\,.
\enn
This, along with the linear system (\ref{ls}), leads to the relation
\[
(-1)^{\frac{M-5}{2}}(\eta_{M-4}+(\frac{M-3}{2})\eta_{M-4})=0,
\]
from which we see that
\be\label{eq: M}
\eta_{M-4}=0.
\en

%
Now, the nonhomogeneous system \eqref{ls}
reduces to the homogeneous one $A_M\,X_M=0$ because of the result \eqref{eq: M}.
We can easily trace from the previous linear transformations we have applied to $A_M$
to know that
\begin{equation}
\label{eq:28}
\mbox{rank}(A_M)=\frac{M-1}{2}-1.
\end{equation}
This means that the non-trivial solutions to $A_M X_M=0$ forms a one-dimensional space.
Then we can directly derive from \eqref{ls} by taking $c_1=\eta_{M}$ as a single parameter that
\begin{equation}
\begin{pmatrix}
c_1\\
c_2\\
\vdots\\
c_{\frac{M-3}2}\\
c_{\frac{M-1}2}\\
\end{pmatrix}
=
\begin{pmatrix}
\eta_{M}\\
-\frac{M!}{(M-3)!3!}\eta_{M}\\
\vdots\\
(-1)^{\frac{M-5}{2}}\frac{M!}{(M-4)!4!}\eta_{M}\\
(-1)^{\frac{M-1}{2}}\eta_{M}\,,
\end{pmatrix}
\end{equation}
hence prove the relations in (\ref{eq:22}) for all $M\geq 7$.
By the arbitrariness of $M=2k+1$ for $k\geq 3$ and induction argument, we can conclude
that $\eta_M=0$ for all odd integers $M\in \N^+$, and in particular, the vanishing
of $c_j$, $j=1,2,\cdots, c_{{(M-1)}/{2}}$.

{\bf Step 2}: Prove that $b_j=0$ for all $j=1,2,\cdots, {(M-3)}/{2}$, and all $M=2k+1$ with $k\geq 3$.
This yields the vanishing of the coefficients $a_{n,m}^{(j)}$ that depend
on $b_1,b_2,\cdots, b_{{(M-3)}/{2}}$ in (\ref{eq:20}).
%

Again, we shall use the induction argument to prove that
\begin{equation}
\label{eq:31}
\begin{split}
&a^{(1)}_{M,0}=a^{(1)}_{M-2,2}=\cdots \,=a^{(1)}_{3,M-3}=a^{(1)}_{1,M-1}=0,\\
&a^{(2)}_{M-1,1}=0=a^{(2)}_{M-3,3}=\cdots \,=a^{(2)}_{2,M-2}=a^{(2)}_{0,M}=0,
\end{split}
\end{equation}
for all $M=2k+1$, $k\geq 3$. Note that the case of $k=2$ (or $M=5$) was verified in (\ref{eq:9}).
We make the induction hypothesis that
\[
\begin{split}
&a^{(1)}_{M^{'},0}=a^{(1)}_{M^{'}-2,2}=\cdots \,=a^{(1)}_{3,M^{'}-3}=a^{(1)}_{1,M^{'}-1}=0\\
&a^{(2)}_{M^{'}-1,1}=0=a^{(2)}_{M^{'}-3,3}=\cdots \,=a^{(2)}_{2,M^{'}-2}=a^{(2)}_{0,M^{'}}=0,
\end{split}
\]
for all $M'=2k'+1$, $0\le k'<k$.
Then by setting $j=1$, $n=M-4,m=0$ and $n=M-6,m=2$ in (\ref{eq:3}), respectively,
and using the induction hypothesis, we obtain that
\begin{eqnarray}
 4\times(M-2)(M-3) b_{1}+4! b_{2}=0\,, \label{eq:17+}\\
\frac{(M-2)!}{(M-6)!}b_{1}+2(M-4)(M-5)4\times3b_{2}+6\times5\times4\times3b_{3}=0.
\label{eq:18+}
\end{eqnarray}
%
We can continue this process, by taking $j=1$, $n=M-2k-4$ and $m=2k$
for any $2\le k \le \frac{M-5}{2}$ in (\ref{eq:3}),
then using (\ref{eq:17+}) and (\ref{eq:18+}), to derive
the homogeneous linear algebraic system
\[
\tilde{G}_M Y_M=0,
\]
where
{\small
\[
\tilde{G}_M=\begin{pmatrix}
\begin{smallmatrix}
4\times(M-2)(M-3) &4! &\dots &0 &0\\
(M-2)(M-3)(M-4)(M-5) &2(M-4)(M-5)4\times3 &6\times5\times4\times3&\dots &0\\
0&(M-4)(M-5)(M-6)(M-7)&2(M-6)(M-7)6\times5 &8\times7\times6\times5&0 \\
0& \ddots & \ddots & \ddots  &\vdots \\
\dots &0  &7\times6\times5\times4 & 2(M-5)(M-6)5\times4 &\frac{(M-3)!}{(M-7)!}\\
\dots &0 &0 &5\times4\times3\times2 & 2(M-3)(M-4)6
\end{smallmatrix}
\end{pmatrix}
\]
}
and
\[
Y_M=\begin{pmatrix}
b_1,&
b_2,&
\cdots \,&
b_{\frac{M-3}2}&
\end{pmatrix}'.
\]
It is easy to find through diagonalization that $\mbox{Det}(G_M) \ne0$, hence leading to
the vanishing of $b_j$, $j=1,2,\cdots, {(M-3)}/{2}$.
\end{proof}

\begin{lemma}\label{lem:step4}
The coefficients $a_{n,m}^{(j)}$ in \eqref{eq:anm} satisfy that
$a^{(1)}_{n,m}=a^{(2)}_{n,m}=0$ for $n+m=2k$, with all $k\ge2$.
%
\end{lemma}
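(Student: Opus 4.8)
The plan is to run the even-degree analogue of the proof of Lemma \ref{lem:2k+1}, as an induction on $k\ge 2$. Fix $M=2k$. First I would use \eqref{eq:4}--\eqref{eq:6} to express all $2(M+1)$ coefficients $a^{(j)}_{n,m}$ with $n+m=M$ in terms of $M-2$ free parameters, arranged into two families $c_1,\dots,c_{k-1}$ and $b_1,\dots,b_{k-1}$ in the spirit of \eqref{eq:20}: from $a^{(1)}_{M,0}=0$ (by \eqref{eq:5}) one sets $a^{(2)}_{M,0}=c_1$; then \eqref{eq:6} gives $a^{(1)}_{M-1,1}=Mc_1$, \eqref{eq:4} gives $a^{(2)}_{M-1,1}=0$ and $a^{(2)}_{M-2,2}=-\frac{M(M-1)}{2}c_1$; one introduces $a^{(1)}_{M-2,2}=b_1$ and continues down the chain, the relation $a^{(2)}_{0,M}=0$ and the second identity in \eqref{eq:6} closing it at the far end. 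A bookkeeping point to isolate here is the parity pattern: in this parametrization every $a^{(1)}$-entry sitting in an \emph{even} column depends only on the $b_j$'s, every $a^{(1)}$-entry in an \emph{odd} column only on the $c_j$'s, and symmetrically the $a^{(2)}$-entries in even (resp.\ odd) columns depend only on the $c_j$'s (resp.\ the $b_j$'s).

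Next comes the induction on $k$, with base case $k=2$ (i.e.\ $M=4$) and the inductive step handled together. Assume $a^{(j)}_{n,m}=0$ whenever $n+m$ is even and $<M$ — the induction hypothesis, together with Lemma \ref{lem:step2} for degrees $0$ and $2$ — and recall $a^{(j)}_{n,m}=0$ whenever $n+m$ is odd and $<M$ (Lemma \ref{lem:2k+1}). Evaluating \eqref{eq:3} at a base index $(n,m)$ with $n+m=M-4$, the degree-$(M-4)$ term $a^{(j)}_{n,m}$ and the degree-$(M-2)$ terms $a^{(j)}_{n+2,m},a^{(j)}_{n,m+2}$ all vanish, so \eqref{eq:3} collapses to the purely homogeneous relation
\[
(m+4)(m+3)(m+2)(m+1)\,a^{(j)}_{n,m+4}
+(n+4)(n+3)(n+2)(n+1)\,a^{(j)}_{n+4,m}
+2(n+2)(n+1)(m+2)(m+1)\,a^{(j)}_{n+2,m+2}=0
\]
for $j=1,2$ and every $(n,m)$ with $n+m=M-4$. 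This is where the even case is genuinely easier than Lemma \ref{lem:2k+1}: no inhomogeneous term $\eta_{M-4}$ survives. Since \eqref{eq:3} does not couple $j=1$ to $j=2$, and by the parity pattern above, these identities decouple into two homogeneous linear systems: the ones with $j=1$, $m$ even (together with those with $j=2$, $m$ odd) involve only $b_1,\dots,b_{k-1}$, while those with $j=2$, $m$ even (together with $j=1$, $m$ odd) involve only $c_1,\dots,c_{k-1}$. Each of these systems is determined or over-determined, and one extracts from each a $(k-1)\times(k-1)$ block which, after the same kind of elementary row reduction used on $A_M$ and $\tilde{G}_M$ in the proof of Lemma \ref{lem:2k+1}, becomes upper triangular with nonzero, purely combinatorial diagonal entries. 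Hence all $c_j$ and all $b_j$ vanish, so every coefficient of degree $M$ vanishes; this closes the induction and yields $a^{(1)}_{n,m}=a^{(2)}_{n,m}=0$ for all $n+m=2k$, $k\ge 2$.

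The main obstacle is the last step: checking, uniformly in $k$, that the extracted $(k-1)\times(k-1)$ matrices are nonsingular, i.e.\ making the triangularization rigorous in general. This is entirely parallel to the treatment of $A_M$ and $\tilde{G}_M$ in Lemma \ref{lem:2k+1} — the entries are explicit products of small integers and the pivots never vanish — but it is where the real index bookkeeping lies; the base case $k=2$, where both blocks are $1\times 1$ with nonzero entries, should be verified directly. A secondary, routine, task is to write out the parametrization of the degree-$M$ coefficients for general even $M$ and to confirm the parity pattern asserted in the first paragraph; one may, if convenient, keep all the identities coming from both parities of $m$, at the harmless cost of further over-determining the already trivial systems.
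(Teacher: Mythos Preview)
Your proposal is correct and follows essentially the same route as the paper: parametrize the degree-$M$ coefficients via \eqref{eq:4}--\eqref{eq:6} into two families of $k-1$ unknowns, use the induction hypothesis so that \eqref{eq:3} at indices of total degree $M-4$ yields two homogeneous linear systems, and verify by row reduction that the resulting $(k-1)\times(k-1)$ matrices are nonsingular (with $k=2$ handled directly). One small remark: your appeal to Lemma~\ref{lem:2k+1} for the vanishing of odd-degree coefficients is unnecessary here, since \eqref{eq:3} preserves the parity of $n+m$ and thus only even-degree terms appear --- but this does no harm.
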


\begin{proof}
The argument is carried out in a similar manner to the one for Lemma\,\ref{lem:2k+1}.
We first show that $a^{(1)}_{n,m}=a^{(2)}_{n,m}=0$ for $k=2$, or $n+m=4$. To this aim, we set $n=m=4$ in (\ref{eq:5})
and $n=m=3$ in (\ref{eq:6}), respectively, to sfind that
\[
a^{(1)}_{4,0}=a^{(2)}_{0,4}=0 \quad \mbox{and} \quad
4a^{(2)}_{4,0}=a^{(1)}_{3,1},\quad 4a^{(1)}_{0,4}=a^{(2)}_{1,3}.
\]
Then setting $n=n_j,m=m_j,n_j+m_j=3$ for $j=3,2,1,0$ in (\ref{eq:4}), respectively, we easily derive
\[
a^{(2)}_{3,1}=0, \quad \frac32a^{(1)}_{3,1}=-a^{(2)}_{2,2},\quad \frac23 a^{(1)}_{2,2}=-a^{(2)}_{1,3},\quad a^{(1)}_{1,3}=0.
\]
Therefore, we are able to express all the coefficients $a_{n,m}^{(j)}$ with $n+m=4$
in two parameters $\eta_1, \eta_2\in \mathbb R$ as follows:
\be\label{eq:eta}
\begin{split}
&a^{(1)}_{4,0}=0, ~a^{(1)}_{3,1}=4\eta_1,a^{(1)}_{2,2}=-6\eta_2, ~a^{(1)}_{1,3}=0, ~a^{(1)}_{0,4}=\eta_2, \\
&a^{(2)}_{4,0}=\eta_1, ~a^{(2)}_{3,1}=0, ~a^{(2)}_{2,2}=-6\eta_1, ~a^{(2)}_{1,3}=4\eta_2, ~a^{(2)}_{0,4}=0.\\
\end{split}
\en
Furthermore, by taking $n=0,m=0$ in (\ref{eq:3}), we obtain
\[
4\, !\, a_{0,4}^{(j)} + 4\, !\, a_{4,0}^{(j)} +8 a_{2,2}^{(j)}=0,\quad j=1,2,
\]
which, along with (\ref{eq:eta}),
concludes that $\eta_1=\eta_2=0$. Therefore, we have shown that $a^{(j)}_{n,m}=0$ for all $n+m=4$ and $j=1,2$.

For any fixed $k\geq 3$, we make the induction hypothesis that
\[
a_{n,m}^{(j)}=0\quad\mbox{for all}\quad M= n+m=2k^{'},\; ~0\le k^{'}<k.
\]
\par
Then we argue analogously to what we did for Lemma\,\ref{lem:2k+1}
to derive from (\ref{eq:4})-(\ref{eq:6}) that
\be\label{eq:hy}
\begin{split}
&a^{(1)}_{M-1,1}=Mc_1, ~a^{(1)}_{M-3,3}=c_2,\; ~\cdots \,,  ~a^{(1)}_{3,M-3}=c_{\frac{M}2-1} , ~a^{(1)}_{1,M-1}=0,\\
&a^{(2)}_{M,0}=c_1, ~a^{(2)}_{M-2,2}=-\frac{M(M-1)}{2}c_1,\; \cdots \,,  ~a^{(2)}_{2,M-2}=-c_{\frac{M}2-1},
~a^{(2)}_{0,M}=0,\\
\end{split}
\en
and
\begin{equation}
\begin{split}
&a^{(1)}_{M,0}=0, ~a^{(1)}_{M-2,2}=b_1,\; \cdots, \, ~a^{(1)}_{2,M-2}=-\frac{M(M-1)}{2}b_{\frac{M}2-1}, ~a^{(1)}_{0,M}=b_{\frac{M}2-1},\\
&a^{(2)}_{M-1,1}=0, \quad a^{(2)}_{M-3,3}=\cdots \,= a^{(2)}_{1,M-1}=Mb_{\frac{M}2-1},\\
\end{split}
\end{equation}
where $c_1$, $c_2$,$\cdots$, $c_{{M}/{2}-1}$ and
$b_1$, $b_2$, $\cdots$, $b_{{M}/{2}-1}$ are all constants in $\C$.

Next, we show that all these constants are identically zero.
To prove that all the constants $c_j$ for $j=1,2,\cdots, M/2-1$ are zero, we
set $n=M-4,m=0,j=2$ and $n=M-5,m=1,j=1$ in (\ref{eq:3}), respectively,
and use the induction hypothesis (\ref{eq:hy}) to deduce
that
\begin{equation}
\label{eq:14}
[\frac{M!}{(M-4)!}a^{(2)}_{M,0}-2\frac{M!}{(M-4)!}]c_1-(M-3)3\times 2c_2=0\,,
\end{equation}
\begin{equation}
\label{eq:15}
\frac{M!}{(M-5)!}c_1+2(M-3)(M-4)3\times2c_2+5\times4\times3\times2c_3=0.
\end{equation}
We can repeat this process by taking $j=1$ and $n=M-2k-5$,$m=2k+1$ for all $0< k \le \frac{M-6}{2}$ in (\ref{eq:3})
to arrive at the linear system
\[
A\, X=0,
\]
where
\[
A:=\begin{pmatrix}
\begin{smallmatrix}
-\frac{M!}{(M-4)!} & -(M-3)3\times2 &0 &\dots &0 &0\\
\frac{M!}{(M-5)!}  & 2(M-3)(M-4)3\times2 &5\times4\times3\times2 &0&\dots &0\\
0 &(M-3)(M-4)(M-5)(M-6) &2(M-5)(M-6)5\times4 &7\times6\times5\times4 &0 &0 \\
0 & 0 & \ddots & \ddots & \ddots  &\vdots \\
0 &\dots &0  &7\times6\times5\times4 & 2(M-5)(M-6)5\times4 &\frac{(M-3)!}{(M-7)!}\\
0 &\dots &0 &0 &5\times4\times3\times2 & 2(M-3)(M-4)3\times2 &
\end{smallmatrix}
\end{pmatrix},
\]
and
\[
X=\begin{pmatrix}
c_1,&
c_2,&
\cdots \,&
c_{\frac{M}2-1}&
\end{pmatrix}^{'}.
\]
Again, by a diagonlization process we can directly verify
that $\mbox{Det} (A)\ne0$, implying that
\[
c_1=c_2=\cdots \,=c_{\frac{M}2-1}=0.
\]

It remains to show that all constants $b_j$ for $j=1,2,\cdots,M/2-1$ also vanish.
For this, we set $n=0,m=M-4,j=1$ and  $n=2,m=M-6,j=1$ in (\ref{eq:3}) respectively to see
that
\begin{equation}
\label{eq:17}
[\frac{M!}{(M-4)!}-2\frac{M!}{(M-4)!}]b_{\frac{M}{2}-1}+4\times3\times b_{\frac{M}{2}-2}=0\,,
\end{equation}
\begin{equation}
\label{eq:18}
-\frac{M!}{2(M-6)!}b_{\frac{M}{2}-1}+2(M-4)(M-5)4\times3b_{\frac{M}{2}-2}+6\times5\times4\times3b_{{\frac{M}{2}}-3}=0.
\end{equation}
Then we may continue this process by taking $j=1$ and $n=2k$,$m=M-2k-4$ for all $2\leq k \leq \frac{M-4}{2}$ in (\ref{eq:3}), to come up with the linear system
\ben
\tilde{A}\; Y=0,
\enn
where
\[
\tilde{A}:=\begin{pmatrix}
\begin{smallmatrix}
-\frac{M!}{(M-4)!} & 4\times3\times2 &0 &\dots &0 &0\\
-\frac{\frac{M!}{(M-6)!}}2  & 2(M-4)(M-5)4\times3 &6\times5\times4\times3 &0&\dots &0\\
0 &(M-4)(M-5)(M-6)(M-7) &2(M-6)(M-7)6\times5 &8\times7\times6\times5 &0 &0 \\
0 & 0 & \ddots & \ddots & \ddots  &\vdots \\
0 &\dots &0  &6\times4\times3\times2 & 2(M-4)(M-5)4\times3 &\frac{(M-2)!}{(M-6)!}\\
0 &\dots &0 &0 &4\times3\times2 & 4(M-2)(M-3) &
\end{smallmatrix}
\end{pmatrix},
\]
and
\[
Y=\begin{pmatrix}
b_{\frac{M}2-1},&
b_{\frac{M}2-2},&
\cdots \,&
b_1
\end{pmatrix}^{'}.
\]
As we did earlier, we can verify that $\mbox{Det} (\tilde{A})\ne0$, therefore derive the desired
results that
\[
b_1=b_2=\cdots =b_{\frac{M}2-1}=0.
\]
\end{proof}

Now the result of Lemma \ref{Lem:1} follows directly from Lemmas \ref{lem:step2}-\ref{lem:step4}.

\begin{remark}
 We believe that it might be possible to prove Theorem \ref{TH} in a general planar corner domain whose angle lies in $(0,2\pi)\backslash\{\pi\}$. However, as one could expect, much more complicated technicalities
 will be involved in the proof of the analogue of Lemma \ref{Lem:1} by our approach.
 The inverse transmission problem for the scalar Helmholtz equation (\ref{TE-H})-(\ref{TE-Ht}) with such general angles also deserves further investigation.
\end{remark}

\section{Acknowledgments}

The work of G. Hu is supported by the NSFC grant (No. 11671028) and NSAF grant (No. U1530401).
The work of L. Li is partially supported by the National Science Foundation of
China (61421062, 61520106004) and Microsoft Research of Asia.
The work of J. Zou was supported by the
Hong Kong RGC General Research Fund (project 14322516) and National Natural Science
Foundation of China/Hong Kong Research Grants Council Joint Research Scheme 2016/17 (project
N\_CUHK437/16).


\begin{thebibliography}{99}

 \bibitem{BLS} E. Bl\aa{}sten, L. P\"aiv\"arinta and J. Sylvester, Corners always scatter, Commun. Math.
Phys., 331 (2014): 725--753.

\bibitem{BP2012} J. Bramble and J. Pasciak, Analysis of a cartesian PML approximation to the three dimensional electromagnetic wave
scattering problem, Int. J. Numer. Anal. Model. 9 (2012): 543-561.









\bibitem{Cakoni} F. Cakoni and D. Colton, {\em A Qualitative Approach to Inverse Scattering Theory}, Springer, Newyork, 2014.

\bibitem{CK} D. Colton and R. Kress, {\em Inverse Acoustic and
    Electromagnetic Scattering Theory}, third edition, Springer, New York, 2013.













\bibitem{ElHu2015} J. Elschner  and G. Hu, Corners and edges always
  scatter, Inverse Problems, 31 (2015): 015003.





\bibitem{EH2017} J. Elschner and G. Hu, Acoustic scattering from corners, edges and circular cones,  Archive for Rational Mechanics and Analysis, 228 (2018): 653-690.





\bibitem{Gil-Tru} D. Gilbarg and N. Trudinger,
{\it Elliptic Partial Differential Equations of
Second Order}, Springer-Verlag, Heidelberg,  1977.


\bibitem{HSV} G. Hu, M. Salo, E. V. Vesalainen, Shape identification
  in inverse medium scattering problems with a single far-field
  pattern, SIAM J. Math. Anal.,  48 (2016): 152--165.

\bibitem{Zou2015} Q.  Hu, C. Liu, S. Shu and J. Zou,  An effective preconditioner for a PML system for electromagnetic scattering problem. ESAIM Math. Model. Numer. Anal. 49 (2015): 839-854.

\bibitem{Ikehata} M. Ikehata,  An inverse transmission scattering problem
and the enclosure method, Computing 75 (2005): 133--156.

\bibitem{Hahner} P. H\"{a}hner, A uniqueness theorem for a transmission problem
in inverse electromagnetic scattering, Inverse Problems 9 (1993): 667-678.

\bibitem{Kirsch93} A. Kirsch and R. Kress,  Uniqueness in inverse
   obstacle scattering,  Inverse Problems, 9 (1993): 285--299.

\bibitem{Kress1} R. Kress, Uniquness in inverse obstacle scattering from electromagnetic waves, Proceedings of the URSI General Assembly 2002, Maastricht,
availabe at: http://num.math.uni-goettingen.de/kress/ursi.pdf

\bibitem{Kress2} R. Kress, Uniqueness in inverse obstacle scattering, New analytic and geometric methods in inverse problems, 323-336, Springer, Berlin, 2004.





\bibitem{KS} S. Kusiak and J. Sylvester, The scattering support,
  Communications on Pure and Applied Mathematics, 56 (2003):
  1525--1548.


\bibitem{LHY} L. Li, G. Hu and J. Yang,  Interface with weakly singular points always scatter,
Inverse Problems 34 (2018): 075003.


\bibitem{LX2017} H. Liu and J. Xiao, Electromagnetic scattering from a penetrable corner, SIAM J. Math. Anal., 49 (2017): 5207-4241.


\bibitem{Zou07} H. Liu, M. Yamamoto and J. Zou, Reflection principle for the Maxwell equations and its application to inverse electromagnetic scattering, Inverse Problems 23 (2007): 2357-2366.

\bibitem{Zou09} H. Liu, M. Yamamoto and J. Zou, New reflection principles for Maxwell's equations and their applications, Numer. Math. Theory Methods Appl. 2 (2009): 1-17.







\bibitem{PSV} L. P\"aiv\"arinta, M. Salo and E. V. Vesalainen,
  Strictly convex corners scatter, Rev. Mat. Iberoam.,  33 (2017): 1369-1396.

\bibitem{Sun1992}
Z. Sun and G. Uhlmann, An inverse boundary value problem for Maxwell's equations, Arch. Rat. Mech. Anal., 119 (1992): 71-93.


















\end{thebibliography}
\end{document}